\newtheorem{thm}{Theorem}[section]
\newtheorem*{theorem*}{Theorem}
\newtheorem{lem}[thm]{Lemma}
\newtheorem{ex}[thm]{Example}
\newtheorem{cor}[thm]{Corollary}
\newtheorem{prob}[thm]{Problem}
\title{On the number of distinct roots of a lacunary polynomial over finite fields}
\author{J\'ozsef Solymosi\thanks{solymosi@math.ubc.ca} \qquad Ethan P. White\thanks{epwhite@math.ubc.ca} \qquad Chi Hoi Yip \thanks{kyleyip@math.ubc.ca} \\
Department of Mathematics\\
The University of British Columbia \\
Vancouver, BC\\
 Canada V6T 1Z2}
\begin{document}

\maketitle

\abstract{We obtain new upper bounds on the number of distinct roots of lacunary polynomials over finite fields. Our focus will be on polynomials for which there is a large gap between consecutive exponents in the monomial expansion. 
}

\section{Introduction}\label{secintro}

A polynomial is \emph{lacunary} if there is a substantial gap between the degree of two consecutive terms. Most often, the gap between the highest and second highest terms is considered. What entails a substantial and useful gap depends on the context. The theory of lacunary polynomials has been critical in applications to computing theory, character sums, and discrete geometry.

Throughout this work, $q$ is a power of a prime $p$ and $\mathbb{F}_q$ is the field with $q$ elements. All polynomials considered will be in the ring $\mathbb{F}_q[x]$. For a polynomial $f(x) \in \mathbb{F}_q[x]$, we will denote by $f^\circ$ and $f^{\circ \circ}$ the degree of $f(x)$, and the degree of the second highest term of $f(x)$, respectively. For a polynomial $f(x) \in \mathbb{F}_q[x]$ we will denote by $Z(f)$ the set of roots of $f(x)$ in $\mathbb{F}_q^*$. A fundamental question is to improve the trivial degree bound on the number of distinct zeros of a polynomial. L\'aszl\'o R\'edei's monograph~\cite{r} is one of the most significant and important works on lacunary polynomials. One of the main theorems proved by R\'edei showed that polynomials cannot be too lacunary while also being fully reducible. For other similar results and applications of them, we refer to~\cite{BB}.

A class of polynomials related to lacunary polynomials are sparse polynomials. A $t$-sparse polynomial is a polynomial with $t$ terms in its monomial expansion. Sparse polynomials are referred to as lacunary by some authors. Analogous to the lacunary results mentioned above, Bibak and Shparlinski showed that few polynomials are simultaneously sparse and fully reducible~\cite{BS}. An upper bound on the number of roots of sparse polynomials has been investigated by several authors. For a $t$-sparse polynomial $f$, Karpinski and Shparlinski showed that $ |Z(f)| \leq \frac{t-1}{t}(q-1)$, and gave an efficient approximation algorithm for $|Z(f)|$, see~\cite{KS}. In Section~\ref{secimpdeg} we give a generalization of this bound. Canetti et al. proved a finite field analogue of Descartes' rule of signs using pigeonholing based on geometry of numbers and affine transformations on the exponents in the monomial expansion of a $t$-sparse polynomial in~\cite{CF}. Kelley~\cite{K} refined their method and showed that 
\begin{equation}\label{kelley} |Z(f)| \leq 2 (q-1)^{1-1/(t-1)} C(f)^{1/(t-1)},\end{equation}
where $f$ is a $t$-sparse polynomial and $C(f)$ denotes the size of the largest coset in $\mathbb{F}_q^*$ on which $f$ vanishes completely. In the case $t=3$, Kelley and Owen \cite{KO} improved the above bound on $|Z(f)|$ for trinomials $f$ to  
\begin{equation}\label{kelleyowen} D(f) \left\lfloor 
\frac{1}{2}+\sqrt{\frac{q-1}{d}}\right\rfloor, \quad \text{for } f(x) = x^n + ax^s + b, \quad \text{where } D(f)=\gcd(n,s,q-1).\end{equation}
Moreover, when $q$ is a square, they gave explicit examples for which the above bound is tight. The aforementioned bounds for sparse polynomials do not always improve the trivial degree bound for lacunary polynomials, since lacunary polynomials are not necessarily sparse, and vice versa. In Section~\ref{secimpdeg} our results improve the bounds in~\eqref{kelley} and~\eqref{kelleyowen} in the case that the polynomial is simultaneously sparse and lacunary. One of R\'edei's seminal results on lacunary polynomials is the following. 

\medskip
\begin{thm} [Theorem 5 in \cite{r}] \label{r}
Let $q$ be a prime power and $d>1$ be a divisor of $q-1$. Let $f(x) \in \mathbb{F}_q[x]$ be a monic polynomial such that 
\[ f(x) | (x^{q-1}-1), \quad f^\circ = \frac{q-1}{d}, \quad \text{and} \quad f^{\circ \circ} \leq \frac{q-1}{d^2}.\]
Then $f(x)$ is an Euler binomial
\[ x^{\frac{q-1}{d}} - \alpha, \quad \text{for some} \quad \alpha
\in (\mathbb{F}_q^\ast)^{\frac{q-1}{d}}
,\]
or if $p\neq 2$, $4|(q-1)$, and $d = 2$, then possibly takes the form
\[ \left( x^{\frac{q-1}{4}} - \beta \right) \left( x^{\frac{q-1}{4}} - \gamma \right) , \text{ where} \quad \beta^2 = 1, \gamma^2 = -1 .\]

\end{thm}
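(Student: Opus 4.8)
The plan is to exploit that $x^{q-1}\equiv 1\pmod f$. The hypotheses say precisely that $f$ is monic, squarefree, with all roots in $\mathbb{F}_q^\ast$, and that $f(x)=x^{m}-r(x)$ where $m:=(q-1)/d$ and $r\in\mathbb{F}_q[x]$ satisfies $\deg r\le (q-1)/d^{2}=m/d$ (the condition $f^{\circ\circ}\le (q-1)/d^{2}$ is exactly the vanishing of the coefficients of $x^{i}$ for $(q-1)/d^{2}<i<m$). Work in $R:=\mathbb{F}_q[x]/(f)$. Since $f\mid x^{q-1}-1$ we have $x^{q-1}\equiv 1$ in $R$, and by definition $x^{m}\equiv r(x)$ in $R$; as $q-1=dm$, raising the second relation to the $d$-th power gives $r(x)^{d}\equiv x^{dm}=x^{q-1}\equiv 1$. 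Hence
\[
f(x)\mid \bigl(r(x)^{d}-1\bigr)\quad\text{in }\mathbb{F}_q[x].
\]
This is the whole idea; the rest is a degree count.

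Since $\deg(r^{d}-1)\le d\deg r\le m=\deg f$, either $r^{d}-1=0$ or $\deg(r^{d}-1)=m$. In the first case $r^{d}=1$ in $\mathbb{F}_q[x]$, so $r$ is a (nonzero) constant $\alpha$ with $\alpha^{d}=1$, giving $f(x)=x^{m}-\alpha$; since the $m$-th power map on $\mathbb{F}_q^\ast$ has image exactly $\{\alpha:\alpha^{d}=1\}=(\mathbb{F}_q^\ast)^{m}$, the condition $\alpha^{d}=1$ is the same as $\alpha\in(\mathbb{F}_q^\ast)^{m}$, so $f$ is an Euler binomial. In the second case, comparing degrees forces $d\deg r=m$, so $n:=\deg r=m/d=(q-1)/d^{2}$ is an integer and $d^{2}\mid q-1$; comparing leading coefficients, $r(x)^{d}-1=\rho^{d}\bigl(x^{m}-r(x)\bigr)$, where $\rho\ne 0$ is the leading coefficient of $r$.

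Rewrite the last identity as $r(x)^{d}-(\rho x^{n})^{d}=1-\rho^{d}r(x)$ and factor the left side (using $Y^{d}-Z^{d}=\prod_{\zeta^{d}=1}(Y-\zeta Z)$, with the $d$-th roots of unity lying in $\mathbb{F}_q$ since $d\mid q-1$):
\[
\prod_{\zeta^{d}=1}\bigl(r(x)-\zeta\rho x^{n}\bigr)=1-\rho^{d}r(x).
\]
For $\zeta\ne 1$ the factor $r(x)-\zeta\rho x^{n}$ has degree exactly $n$ (leading coefficient $\rho(1-\zeta)\ne 0$), while the $\zeta=1$ factor is $r(x)-\rho x^{n}$, of some degree $e\le n-1$. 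The right side has degree $n$ (as $\deg r=n\ge 1$), and $r\ne\rho x^{n}$ (otherwise $f=x^{n}(x^{(d-1)n}-\rho)$ would vanish at $0$, contradicting $f\mid x^{q-1}-1$), so $e\ge 0$ and $e+(d-1)n=n$. Since $n\ge 1$ this forces $d=2$ and $e=0$, that is, $r(x)=\rho x^{n}+c$ with $c\ne 0$ and $n=(q-1)/4$; in particular $4\mid q-1$ (and $p\ne 2$, which is automatic here). Thus $f(x)=x^{2n}-\rho x^{n}-c=Q(x^{n})$ with $Q(T)=T^{2}-\rho T-c$, and the roots of $f$ are the $n$-th roots of the two roots $\beta,\gamma$ of $Q$. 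As $(\mathbb{F}_q^\ast)^{n}$ is the group of $4$-th roots of unity, $\beta,\gamma$ are $4$-th roots of unity, so $Q(T)\mid T^{4}-1$; reducing $T^{4}\equiv 1\pmod Q$ via $T^{2}\equiv \rho T+c$ and using that $\{1,T\}$ is an $\mathbb{F}_q$-basis of $\mathbb{F}_q[T]/(Q)$ pins down $(\rho,c)$. Equivalently, $\beta+\gamma=\rho\ne 0$ and $\beta\gamma=-c$ with $\beta,\gamma$ distinct $4$-th roots of unity, and $\rho\ne 0$ rules out $\{\beta,\gamma\}=\{1,-1\}$ and $\{\beta,\gamma\}=\{i,-i\}$, leaving exactly the pairs with $\beta^{2}=1$ and $\gamma^{2}=-1$. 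This is the claimed exceptional form, and conversely each polynomial listed in the statement satisfies the hypotheses.

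The one genuine step is producing the divisibility $f\mid r^{d}-1$; after that the argument is degree bookkeeping. The places needing care are the exact equivalence in the first paragraph, the verification that the case $r=\rho x^{n}$ is impossible, and the short computation in the last paragraph isolating the pair $(\beta,\gamma)$ — none of which I expect to be a serious obstacle.
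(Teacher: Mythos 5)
The paper does not prove this statement: it is quoted verbatim as Theorem 5 of R\'edei's monograph \cite{r} and used as background, so there is no in-paper argument to compare yours against. Judged on its own, your proof is correct and complete. The key step --- from $x^{(q-1)/d}\equiv r(x)$ and $x^{q-1}\equiv 1$ modulo $f$ deduce $f\mid r^d-1$, then exploit that $\deg(r^d-1)\le d\cdot\frac{q-1}{d^2}=\deg f$ --- is exactly the right lever, and the two resulting cases ($r^d=1$, giving the Euler binomial; $d\deg r=\deg f$, giving the exceptional quadratic-in-$x^{(q-1)/4}$ form) are handled cleanly. All the small points check out: $r\ne 0$ and $r\ne\rho x^n$ because $f(0)\ne 0$; the $d$ distinct $d$-th roots of unity exist in $\mathbb{F}_q$ because $d\mid q-1$ forces $p\nmid d$; and the identification $\{\alpha:\alpha^d=1\}=(\mathbb{F}_q^\ast)^{(q-1)/d}$ is the standard fact about the cyclic group $\mathbb{F}_q^\ast$. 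The one place worth a sentence more of care is the claim that both roots $\beta,\gamma$ of $Q(T)=T^2-\rho T-c$ lie in $\mu_4=(\mathbb{F}_q^\ast)^{(q-1)/4}$: this follows because $f\mid x^{q-1}-1$ gives $f$ exactly $2n$ distinct roots in $\mathbb{F}_q^\ast$, each fiber of the $n$-th power map has size $n$, so both roots of $Q$ are hit and hence both lie in the image $\mu_4$; you assert this but the counting is implicit. With that spelled out, the case analysis on pairs of distinct fourth roots of unity with nonzero sum correctly isolates $\beta^2=1$, $\gamma^2=-1$.
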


The above theorem shows that within a certain class of polynomials, a polynomial $f(x)$ cannot be simultaneously lacunary and possess $f^\circ$ distinct roots. Our main focus will be to extend this property to a larger class of polynomials and show that the number of distinct roots of many lacunary polynomials is often less than its degree. Hereafter $d \geq 1$ will always be a positive divisor of $q-1$. Our main focus will be on polynomials $f(x) \in \mathbb{F}_q[x]$ of the form 
\begin{equation}\label{fform} f(x) = x^{\frac{q-1}{d} - \ell} + g(x),\end{equation}
where $\ell \geq 0$ and $g^\circ < \frac{q-1}{d}-\ell$. 

\section{Main results}\label{secmain}

In this section we present our main results. The following theorem serves as the foundation for many of our more involved results.

\begin{thm}\label{t1} Let $\ell \geq 0$ be a nonnegative integer. Suppose $f(x) \in \mathbb{F}_q[x]$ has the form $x^{\frac{q-1}{d} - \ell} + g(x)$, for some $g(x) \in \mathbb{F}_q[x]$ such that $1 \leq g^\circ < \frac{q-1}{d} - \ell$. Let $\delta = \frac{q-1}{d} - \ell - g^\circ$, be the gap between the exponents of the two highest terms. Then  $|Z(f)| \leq d(\ell+g^{\circ}) = q-1 - d\delta$.
\end{thm}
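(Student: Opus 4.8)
The plan is to exploit the hypothesis $d \mid q-1$, which makes raising to the power $\tfrac{q-1}{d}$ send $\mathbb{F}_q^\ast$ onto the group $\mu_d$ of $d$-th roots of unity; since $d \mid q-1$, this group has exactly $d$ elements inside $\mathbb{F}_q$. First I would rewrite the root condition: if $x \in Z(f)$, then $x^{\frac{q-1}{d}-\ell} = -g(x)$, and multiplying through by $x^\ell$ (legitimate since $x \neq 0$) yields the key identity
\[ x^{\frac{q-1}{d}} = -x^\ell g(x). \]
The left-hand side lies in $\mu_d$, so $-x^\ell g(x)$ must be one of the $d$ roots of unity — this is what forces the roots of $f$ to be distributed among only $d$ ``sheets''.

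Next I would set up a partition argument: write $Z(f) = \bigsqcup_{\zeta \in \mu_d} Z_\zeta$ with $Z_\zeta = \{ x \in Z(f) : x^{\frac{q-1}{d}} = \zeta \}$, a disjoint union over the at most $d$ indices $\zeta \in \mu_d$. By the displayed identity, every $x \in Z_\zeta$ is a root of the single polynomial $h_\zeta(x) := x^{\ell} g(x) + \zeta$. Because $g^\circ \geq 1$, the polynomial $h_\zeta$ has degree exactly $\ell + g^\circ$ (its leading coefficient is the leading coefficient of $g$, which is nonzero), so $h_\zeta$ is a nonzero polynomial and hence $|Z_\zeta| \leq \ell + g^\circ$. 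Summing over the at most $d$ classes gives $|Z(f)| \leq d(\ell + g^\circ)$, and since $\delta = \tfrac{q-1}{d} - \ell - g^\circ$ we have $\ell + g^\circ = \tfrac{q-1}{d} - \delta$, so $d(\ell+g^\circ) = q-1-d\delta$, the claimed bound.

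The argument is short, so there is no serious obstacle; the one point requiring care is the verification that each $h_\zeta$ is a genuinely nonzero polynomial so that its root count is bounded by its degree — this is precisely where the hypothesis $g^\circ \geq 1$ (equivalently, that $g$ is nonconstant, so $\delta < \tfrac{q-1}{d}$) is used. One should also record explicitly that $\mu_d \subseteq \mathbb{F}_q$ has exactly $d$ elements, which relies on $d \mid q-1$; without this the count of sheets would be wrong.
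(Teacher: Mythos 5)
Your argument is correct and is essentially the paper's own proof: both multiply by $x^\ell$ to land in $(\mathbb{F}_q^\ast)^{\frac{q-1}{d}}=\mu_d$, partition the roots according to the value $\xi=x^{\frac{q-1}{d}}$, and bound each class by the degree $\ell+g^\circ$ of the nonzero polynomial $x^\ell g(x)+\xi$. Your write-up is in fact slightly more careful than the paper's, which misstates the per-class bound as ``at most $g^\circ$ roots'' where $\ell+g^\circ$ is meant.
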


The following is the main theorem of Section 3 and describes when we are able to guarantee that $|Z(f)|<f^\circ$ for $f(x)$ of the form~\eqref{fform}.

\begin{thm}\label{t3} Let $\ell \geq 0$ be a nonnegative integer. Suppose $f(x) \in \mathbb{F}_q[x]$ has the form $x^{\frac{q-1}{d} - \ell} + g(x)$, for some $g(x) \in \mathbb{F}_q[x]$ such that $1 \leq g^\circ < \frac{q-1}{d} - \ell$. If one of the following holds, then $|Z(f)| < f^{\circ}$.

\begin{enumerate}[label=\textbf{(\arabic*)}]

\item $d(d+1)\ell + d^2g^\circ < q-1 $;
\item $d^2(\ell + g^\circ) \leq q-1$ and $d(d+1)\ell>q-1$;
\item $d^2(\ell + g^\circ) > q-1$, $d\ell + d^3 g^\circ < q-1$, and $d(d^2+1) \ell + d^3 g^\circ < (q-1)(d+1)$. 
\end{enumerate}
\end{thm}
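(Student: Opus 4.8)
The claim for hypothesis (1) is immediate from Theorem~\ref{t1}: that result already gives $|Z(f)|\le d(\ell+g^{\circ})$, and multiplying out shows that $d(\ell+g^{\circ})<\frac{q-1}{d}-\ell=f^{\circ}$ is exactly the inequality $d(d+1)\ell+d^{2}g^{\circ}<q-1$, so nothing further is needed there. For (2) and (3) the bound of Theorem~\ref{t1} is not by itself sharp enough, so I argue by contradiction. Since $|Z(f)|\le f^{\circ}$ always holds, a failure of the conclusion forces $|Z(f)|=f^{\circ}$; as $f$ is monic of degree $f^{\circ}$ with exactly $f^{\circ}$ distinct roots, all lying in $\mathbb{F}_q^{*}$, this means $f(x)=\prod_{\zeta\in Z(f)}(x-\zeta)$ and in particular $f(x)\mid x^{q-1}-1$. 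This divisibility is the structural hook for the rest of the proof.

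Assuming $f\mid x^{q-1}-1$, I pass to the complementary polynomial $\bar f(x):=(x^{q-1}-1)/f(x)$, monic of degree $q-1-f^{\circ}=q-1-\frac{q-1}{d}+\ell$, with $Z(\bar f)=\mathbb{F}_q^{*}\setminus Z(f)$ and hence $|Z(\bar f)|=q-1-f^{\circ}$. The key point is that $\bar f$ inherits the lacunarity of $f$: comparing coefficients in $f(x)\bar f(x)=x^{q-1}-1$ from the top down shows that the coefficients of $x^{\bar f^{\circ}-1},\dots,x^{\bar f^{\circ}-\delta+1}$ in $\bar f$ all vanish, where $\delta=\frac{q-1}{d}-\ell-g^{\circ}$ is the gap of $f$; in fact $\bar f(x)=x^{\bar f^{\circ}}-a\,x^{\bar f^{\circ}-\delta}+(\text{lower order})$, with $a$ the leading coefficient of $g$. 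Now I reapply Theorem~\ref{t1}, this time to $\bar f$, writing $\bar f^{\circ}=\frac{q-1}{D}-\bar\ell$ for a divisor $D$ of $q-1$ chosen as large as the degree of $\bar f$ permits; this yields $|Z(\bar f)|\le D(\bar\ell+\bar f^{\circ\circ})\le D\bigl(\tfrac{q-1}{D}-\delta\bigr)=q-1-D\delta$. The three hypotheses of the theorem are precisely the ranges of parameters — sorted according to the size of $\ell$ relative to $q-1$ and according to whether $d^{2}(\ell+g^{\circ})$ lies at or below, or above, the Rédei-type lacunarity threshold $q-1$ — in which the available choice of $D$ makes $q-1-D\delta<q-1-f^{\circ}=|Z(\bar f)|$, a contradiction. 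When the degree of $\bar f$ is too large for any useful $D$, an equivalent route is to first show (again via Theorem~\ref{t1}, or via Rédei's Theorem~\ref{r} in this lacunary regime) that $Z(f)$ is confined to a coset of the subgroup $\mu_{(q-1)/d}$, and then rerun the argument inside that subgroup, effectively replacing $q-1$ by $(q-1)/d$ and iterating. In the borderline case where the auxiliary polynomial becomes an Euler binomial — so the hypothesis $g^{\circ}\ge 1$ of Theorem~\ref{t1} is lost — one instead reads off the explicit form of $f=(x^{q-1}-1)/\bar f$ and checks that it is incompatible with the standing hypotheses, or appeals to Rédei's Theorem~\ref{r} directly.

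The heart of the matter, and the source of the somewhat intricate inequalities in (2) and (3), is this last step: choosing the divisor $D$ (or, in the iterated version, the subgroup) so that the auxiliary polynomial is genuinely of the admissible form for Theorem~\ref{t1}, and carefully tracking how $\ell$, $g^{\circ}$ and the gap $\delta$ transform under passage to the complement and under the change of divisor. That bookkeeping is exactly what forces the split into three regimes. The secondary difficulty is cleanly disposing of the Euler-binomial boundary case, where one must fall back on Rédei's theorem and also verify that the exceptional $d=2$ shape appearing there cannot slip past the hypotheses.
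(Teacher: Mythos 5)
Your case (1) is exactly the paper's argument and is fine. For cases (2) and (3), however, the route you propose has a genuine gap at precisely the step you defer to ``bookkeeping.'' Passing from $f\mid x^{q-1}-1$ to the complement $\bar f=(x^{q-1}-1)/f$ does correctly transfer the lacunarity (the coefficient comparison you describe is valid), but it also inflates the degree: $\bar f^{\circ}=q-1-\tfrac{q-1}{d}+\ell=\tfrac{(d-1)(q-1)}{d}+\ell$, which for $d\ge 2$ exceeds $\tfrac{q-1}{D}$ for every divisor $D\ge 2$. So the only way to put $\bar f$ in the form required by Theorem~\ref{t1} is $D=1$, and then the bound is $|Z(\bar f)|\le q-1-\delta$ with $\delta=\tfrac{q-1}{d}-\ell-g^{\circ}\le f^{\circ}$; since $|Z(\bar f)|=q-1-f^{\circ}$, the needed inequality $q-1-D\delta<q-1-f^{\circ}$ becomes $\delta>f^{\circ}$, i.e.\ $g^{\circ}<0$, which never holds. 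Switching to Theorem~\ref{t2} with $D\ge 2$ fares no better, because $\bar f^{\circ\circ}=\bar f^{\circ}-\delta$ is still of order $q$, so $D\max\{m,\bar f^{\circ\circ}\}\ge \bar f^{\circ}$ in the parameter ranges of (2) and (3). The fallback you mention --- confining $Z(f)$ to a coset via R\'edei's Theorem~\ref{r} --- is also not available, since that theorem requires $f^{\circ}=\tfrac{q-1}{d}$ exactly (i.e.\ $\ell=0$), whereas case (2) forces $\ell>\tfrac{q-1}{d(d+1)}>0$. In short, no admissible choice of $D$ produces the contradiction, and the three inequality regimes cannot be recovered this way.

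The paper avoids this degree blow-up by a different transformation: every nonzero root of $f$ is a root of $\prod_{\xi\in(\mathbb{F}_q^{*})^{(q-1)/d}}\bigl(x^{\ell}g(x)+\xi\bigr)=x^{d\ell}g^{d}(x)-1$, and substituting $x=y^{-1}$ and multiplying by $-y^{d(\ell+g^{\circ})}$ yields the monic polynomial $h(y)=y^{d(\ell+g^{\circ})}-y^{dg^{\circ}}g^{d}(y^{-1})$ of degree $d(\ell+g^{\circ})$ with second-highest term of degree $dg^{\circ}$. This keeps the degree small enough to reapply Theorem~\ref{t1} with the \emph{same} $d$ when $d(\ell+g^{\circ})\le\tfrac{q-1}{d}$ (giving case (2), bound $q-1-d^{2}\ell$), and to apply Theorem~\ref{t2} when $d(\ell+g^{\circ})>\tfrac{q-1}{d}$ (giving case (3)). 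If you want to salvage your write-up, replace the complement $(x^{q-1}-1)/f$ by this product-and-reverse construction; the contradiction framing then becomes unnecessary, since the argument yields explicit upper bounds rather than merely ruling out $|Z(f)|=f^{\circ}$.
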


The inequalities in the cases within Theorem~\ref{t3} create regions on an $g^\circ,\ell$-axis system. See Figure~\ref{f1} for visualization of when $|Z(f)|<f^\circ$.

At the end of Section~\ref{secimpdeg} we present some results on $t$-sparse polynomials. Theorem~\ref{intthm} is a generalization of Theorem~\ref{t1}, giving a bound on $|Z(h)|$ for a $t$-sparse polynomial $h$. In Section 4 we refine the arguments used in Theorem~\ref{t3} to obtain lower upper bounds on $|Z(f)|$ for some $f(x)$ as in~\eqref{fform}. The following theorem is the main result of Section 4.

\begin{thm}\label{bestboundthm} Let $f(x) \in \mathbb{F}_q[x]$ be as in~\eqref{fform}. 

\begin{enumerate}[label=\textbf{(\arabic*)}]
\item If $\ell > \frac{q-1}{d(d+1)}$ and $i \geq -1$ is the largest integer such that 
\begin{equation*}\label{1eq} \ell + g^\circ < (q-1) \left( \frac{1+ d^{-2i-1}}{d(d+1)}\right),\end{equation*}
then
\[ |Z(f)| \leq \frac{q-1}{d+1} - d^{2i+2}\left(\ell - \frac{q-1}{d(d+1)}\right).\]

\item If $\ell + g^\circ  < \frac{q-1}{d(d+1)}$ and $i \geq -1$ is the largest integer such that 
\begin{equation*}\label{2eq} \ell > (q-1) \left( \frac{1- d^{-2i-2}}{d(d+1)}\right),\end{equation*}
then 
\[|Z(f)| \leq \frac{q-1}{d+1} - d^{2i+3} \left( \frac{q-1}{d(d+1)} - (\ell + g^\circ) \right).\]

\item  If $\ell \leq \frac{q-1}{d(d+1)}$, $ \ell + g^\circ \geq \frac{q-1}{d(d+1)}$, and $d(d+1)\ell + d^2 g^\circ < q-1$, then 
\[ |Z(f)| \leq d(\ell+g^\circ) .\]

\item If $\ell \leq \frac{q-1}{d(d+1)}$, $ \ell + g^\circ \geq \frac{q-1}{d(d+1)}$, and $d(d+1)\ell + d^2 g^\circ \geq q-1$, then 
\[ |Z(f)| \leq f^\circ = \frac{q-1}{d} - \ell .\]
\end{enumerate}

\end{thm}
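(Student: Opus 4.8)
The plan is to control $N:=|Z(f)|=\deg\gcd(f,x^{q-1}-1)$ by iterating the mechanism that already proves Theorem~\ref{t1}. That mechanism is: if $s\in Z(f)$ then $s^{\frac{q-1}{d}-\ell}=-g(s)$, hence $s^{(q-1)/d}=-s^{\ell}g(s)$, and since $s^{q-1}=1$ the element $\omega:=-s^{\ell}g(s)$ satisfies $\omega^{d}=1$; conversely $s^{(q-1)/d}=\omega\in\mu_d$ together with $s^{\ell}g(s)+\omega=0$ forces $f(s)=0$. Thus
\[ Z(f)=\bigsqcup_{\omega\in\mu_d}\Bigl(C_\omega\cap Z\bigl(x^{\ell}g(x)+\omega\bigr)\Bigr),\qquad C_\omega:=\{\,s:s^{(q-1)/d}=\omega\,\}, \]
and each $C_\omega$ is a coset of the cyclic group $\mu_{(q-1)/d}$. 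Bounding each piece by $\deg(x^\ell g+\omega)=\ell+g^\circ$ recovers $N\le d(\ell+g^\circ)$, i.e.\ Theorem~\ref{t1}; from here cases \textbf{(3)} and \textbf{(4)} are immediate, since in the range $\ell\le\frac{q-1}{d(d+1)}\le\ell+g^\circ$ the hypothesis $d(d+1)\ell+d^2g^\circ<q-1$ is exactly the inequality $d(\ell+g^\circ)<\frac{q-1}{d}-\ell=f^\circ$, so \textbf{(3)} is Theorem~\ref{t1} (and lies below $f^\circ$), while \textbf{(4)} is the trivial degree bound in the complementary subcase.

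For cases \textbf{(1)} and \textbf{(2)} the plan is to feed each surviving piece back into the same machine. The set $C_\omega\cap Z(x^\ell g(x)+\omega)$ consists of roots, inside a cyclic group of order $\tfrac{q-1}{d}$, of a polynomial of degree $\ell+g^\circ=\tfrac{q-1}{d}-\delta$; after a monomial shift aligning its degree with a divisor of $\tfrac{q-1}{d}$ — and after passing to a finite extension so the needed roots of unity are available, which alters neither the $\mathbb{F}_q$-roots nor the cardinalities in play — the power trick again splits it into at most $d$ pieces inside cosets of order $\tfrac{q-1}{d^2}$. Replacing polynomials by their reciprocals roughly interchanges the \emph{outer} gap $\ell$ (from $\tfrac{q-1}{d}$ down to $f^\circ$) with the \emph{inner} gap $\delta$ (between the two leading terms of $f$), so the descent proceeds naturally in pairs of steps, each step costing one factor of $d$ and dividing the ambient order by $d$, and each pair returning a polynomial of the same shape as $f$ with $\ell$ and $\delta$ rescaled. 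Hence $i$ such rounds contribute $d^{2i}$, the integer $i$ in the statement being the largest number of rounds for which the descent stays legal — the transformed polynomial keeps a positive inner gap and still fits inside the shrinking window — and the inequalities defining $i$ are exactly that legality condition. Case \textbf{(1)} ($\ell$ large) and case \textbf{(2)} ($\delta$ large) differ only in which gap drives the descent, hence in whether it halts on a whole round or one extra half-step, which accounts for the exponents $2i+2$ versus $2i+3$ and for the boundary values at $i=-1$ recovering $f^\circ$ and Theorem~\ref{t1} respectively.

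Turning this into the displayed formulas is then bookkeeping: the per-step counting inequalities telescope, the geometric series in $d^{-2}$ that appear have limits $\frac{q-1}{d+1}$ and $\frac{q-1}{d(d+1)}$ (which is where those constants come from), and matching the telescoped bound to these limits yields the four expressions. One must also dispatch degenerate stages — if a transformed polynomial becomes a binomial $x^a+c$ one uses $|Z(x^a+c)|\le\gcd(a,q-1)$ instead, which only helps — and check that the monomial shifts discard no $\mathbb{F}_q$-root.

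I expect the real obstacle to be twofold. First, a literal refinement of $C_\omega$ into smaller cosets over $\mathbb{F}_q$ would require $d^2\mid q-1$ (and higher powers later), which is not assumed; the recursion must therefore be carried out inside the abstract cyclic group $\mu_{(q-1)/d}$ (i.e.\ modulo $x^{(q-1)/d}-\omega$), importing the necessary $d$-th and higher roots of unity from an extension, and one must verify that each descent step costs a factor of exactly $d$ and introduces no spurious roots. Second — and this is where I expect the genuine difficulty — one must carry a strong enough invariant through the iteration to guarantee that the intermediate exponents of $f$ (its third, fourth, \dots\ terms), which are absent from the final bound, provably cannot weaken it; pinning down this invariant is what forces the precise two-step/$d^2$ structure and the exact form of the inequalities defining $i$.
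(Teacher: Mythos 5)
Your decomposition $Z(f)=\bigsqcup_{\omega}\bigl(C_\omega\cap Z(x^{\ell}g+\omega)\bigr)$ and the treatment of cases \textbf{(3)} and \textbf{(4)} are fine, but the engine you propose for cases \textbf{(1)} and \textbf{(2)} --- refining each $C_\omega$ into $d$ sub-cosets of order $\tfrac{q-1}{d^2}$, with the ambient cyclic group shrinking by a factor of $d$ at every step --- is not a workable route, and it is not how the iteration actually goes. Splitting a coset of the subgroup of order $\tfrac{q-1}{d}$ into $d$ cosets of order $\tfrac{q-1}{d^2}$ requires $d^2\mid q-1$ (and $d^{k}\mid q-1$ at later stages), which is not assumed; passing to an extension field does not repair this, because the roots you are counting lie in $\mathbb{F}_q^\ast$, whose group structure is unchanged by the extension, and the count ``$d$ pieces, each bounded by the degree'' relies on $\mu_d\subseteq\mathbb{F}_q^\ast$ having exactly $d$ elements. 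More importantly, you never extract from your descent the explicit recursion that produces the coefficients $d^{2i+2}$, $d^{2i+3}$ and the constants $\tfrac{q-1}{d+1}$, $\tfrac{q-1}{d(d+1)}$; calling that step ``bookkeeping'' hides essentially all of the remaining content.

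The mechanism that works keeps the ambient group equal to $\mathbb{F}_q^\ast$ and the divisor equal to $d$ at every stage. From $f_i(x)=x^{\frac{q-1}{d}-\ell_i}+g_i(x)$, every nonzero root of $f_i$ is a root of the single polynomial $\prod_{\xi\in(\mathbb{F}_q^\ast)^{(q-1)/d}}\bigl(x^{\ell_i}g_i(x)+\xi\bigr)=x^{d\ell_i}g_i^d(x)-1$; substituting $x=y^{-1}$ and multiplying by $-y^{d(\ell_i+g_i^\circ)}$ turns this into $f_{i+1}(y)=y^{d(\ell_i+g_i^\circ)}-y^{dg_i^\circ}g_i^d(y^{-1})$, which has the same shape with $\ell_{i+1}=\tfrac{q-1}{d}-d(\ell_i+g_i^\circ)$ and $g_{i+1}^\circ=dg_i^\circ$, and satisfies $|Z(f_i)|\le|Z(f_{i+1})|\le f_{i+1}^\circ=d(\ell_i+g_i^\circ)$, valid as long as $\ell_{i+1}\ge 0$. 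Solving this linear recursion (the solution splits by parity of $i$) gives $\ell_{2i}+g_{2i}^\circ=\tfrac{q-1}{d(d+1)}+d^{2i}\bigl(\ell+g^\circ-\tfrac{q-1}{d(d+1)}\bigr)$ and $\ell_{2i+1}+g_{2i+1}^\circ=\tfrac{q-1}{d(d+1)}+d^{2i+1}\bigl(\tfrac{q-1}{d(d+1)}-\ell\bigr)$; the hypotheses in \textbf{(1)} and \textbf{(2)} are precisely the conditions determining how many steps remain legal and which parity class is decreasing, and the stated bounds are $d(\ell_{2i+1}+g_{2i+1}^\circ)$ and $d(\ell_{2i+2}+g_{2i+2}^\circ)$ respectively. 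Note also that your anticipated ``genuine difficulty'' --- controlling the intermediate exponents of $g$ through the iteration --- evaporates here: the only invariant needed is that each $g_i$ has nonzero constant term, which is automatic since the constant term of $g_{i+1}$ is $\pm(\text{leading coefficient of }g_i)^d$.
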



\section{Improving the degree bound}\label{secimpdeg}

Our main focus in this section is to identify lacunary polynomials $f(x) \in \mathbb{F}_q[x]$ that must have strictly less than $f^\circ$ distinct nonzero roots. As remarked upon in the introduction, we will consider polynomials $f(x) \in \mathbb{F}_q[x]$ of the form 
\begin{equation*}\label{fform2} f(x) = x^{\frac{q-1}{d} - \ell} + g(x),\end{equation*}
where $\ell \geq 0$ and $g^\circ < \frac{q-1}{d}-\ell$. Since we are interested in nonzero roots, we will always assume that the constant term of $f(x)$ is nonzero. Our first two theorems are motivated by the following well-known result in the case $d = 1$, we include a proof for completeness.

\begin{lem} Suppose $f(x) \in \mathbb{F}_q[x]$ has the form $x^m + g(x)$, for some $g(x) \in \mathbb{F}_q[x]$ such that $1 \leq g^\circ < m$. Let $\delta =m-g^\circ$ be  the gap between the exponents of the two highest terms. Then  $|Z(f)| \leq  q-1 -\delta$.
\end{lem}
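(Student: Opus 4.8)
The plan is to work in the quotient ring $\mathbb{F}_q[x]/(x^{q-1}-1)$, using the fact that every nonzero root of $f$ in $\mathbb{F}_q^*$ is a root of $\gcd(f(x), x^{q-1}-1)$, and the degree of this gcd bounds $|Z(f)|$. First I would reduce all exponents of $f$ modulo $q-1$; since $m < q-1$ (as $m = \frac{q-1}{1} - \ell = q-1-\ell$ with $\ell \geq 0$, and if $\ell = 0$ the leading term is $x^{q-1}$ which reduces to $1$), we may assume the exponents already lie in $\{0, 1, \dots, q-2\}$, or handle the wraparound explicitly.

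The key step: consider the polynomial $x^{q-1} - 1$ together with $f(x) = x^m + g(x)$ where $\deg g = g^\circ = m - \delta$. I would look at $x^{q-1-m} f(x) = x^{q-1} + x^{q-1-m} g(x) \equiv 1 + x^{q-1-m}g(x) \pmod{x^{q-1}-1}$. Call this $h(x) = 1 + x^{q-1-m}g(x)$; its degree is $(q-1-m) + g^\circ = q-1 - \delta$, and $h$ has nonzero constant term. Every nonzero root $\zeta$ of $f$ satisfies $\zeta^m = -g(\zeta)$, and since $\zeta^{q-1} = 1$, we get $\zeta^{q-1-m} g(\zeta) = -1$, hence $h(\zeta) = 0$. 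Conversely one checks the correspondence is tight enough: the nonzero roots of $f$ are among the roots of $h$, so $|Z(f)| \leq \deg h = q-1-\delta$.

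The main obstacle I anticipate is bookkeeping around degenerate cases — specifically ensuring that $h(x)$ is not the zero polynomial (it is not, since its constant term is $1$) and that multiplying by $x^{q-1-m}$ does not introduce or lose roots in $\mathbb{F}_q^*$ (it does not, since $x^{q-1-m}$ is a unit on $\mathbb{F}_q^*$, so $f(\zeta) = 0 \iff x^{q-1-m}f(x)$ vanishes at $\zeta$ for $\zeta \neq 0$). One should also confirm $\deg h = q-1-\delta$ requires $g^\circ \geq 1$ so that the top term of $x^{q-1-m}g(x)$ is not cancelled and genuinely has degree $q-1-\delta > q-1-m \geq 0$; the hypothesis $1 \leq g^\circ < m$ guarantees exactly this. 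Everything else is a routine degree count, and the statement also follows as the $d=1$ special case of Theorem~\ref{t1}, but giving this direct self-contained argument is cleaner.
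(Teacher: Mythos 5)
Your proof is correct and is essentially identical to the paper's: both multiply $f$ by $x^{q-1-m}$, use $a^{q-1}=1$ for $a\in\mathbb{F}_q^*$ to replace the leading term by $1$, and bound $|Z(f)|$ by the degree $q-1-\delta$ of the resulting polynomial $1+x^{q-1-m}g(x)$. The extra framing via $\gcd(f,x^{q-1}-1)$ and the degenerate-case checks are harmless but not needed beyond what the paper already does.
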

\begin{proof} Note that for any $a \in \mathbb{F}_q^*$, $a^{q-1}=1$. Consequently, for any $a \in \mathbb{F}_q^*$ we have $a^{q-1-m}f(a) = a^{q-1-m}g(a)+1$. This gives
\[
|Z(f)|=|Z(x^{q-1-m}g(x)+1)|\leq q-1-m+g^\circ=q-1-\delta.
\]
\end{proof}

If the degree of the polynomial $f$ is bounded by $\frac{q-1}{d}$, then we have the following improved upper bound of $|Z(f)|$.

\medskip
\noindent\textbf{Theorem~\ref{t1}.}\begin{em} Let $\ell \geq 0$ be a nonnegative integer. Suppose $f(x) \in \mathbb{F}_q[x]$ has the form $x^{\frac{q-1}{d} - \ell} + g(x)$, for some $g(x) \in \mathbb{F}_q[x]$ such that $1 \leq g^\circ < \frac{q-1}{d} - \ell$. Let $\delta = \frac{q-1}{d} - \ell - g^\circ$, be the gap between the exponents of the two highest terms. Then  $|Z(f)| \leq d(\ell+g^{\circ}) = q-1 - d\delta$.
\end{em}

\begin{proof} Since $Z(f) = Z(x^\ell f)$, we consider the roots of $x^{\frac{q-1}{d}} + x^\ell g(x)$. A root of $x^\ell f(x)$ is a root of 
\begin{equation}\label{t1e} \xi + x^\ell g(x),\end{equation}
for some $\xi \in (\mathbb{F}_q^*)^{\frac{q-1}{d}}$. Since $g^\circ \geq 1$, (\ref{t1e}) has at most $g^\circ$ roots. Combining this with $|(\mathbb{F}_q^*)^{\frac{q-1}{d}}| = d$ gives the required bound. 

\end{proof}

For a polynomial $f(x)$ satisfying the hypotheses of Theorem~\ref{r} that is not a binomial, note that Theorem~\ref{t1} implies that $f^{\circ \circ}=\frac{q-1}{d^2}$. Below is an example when Theorem~\ref{t1} is tight. 

\begin{ex}\label{ex}\rm Let $p$ be a prime $p \equiv 7 \pmod {20}, p>7, d=2, \ell=1$ and $g^\circ=2$. By the law of quadratic reciprocity, we have 
\[
\bigg(\frac{5}{p}\bigg) \bigg(\frac{p}{5}\bigg)=(-1)^{2 \cdot \frac{p-1}{2}}=1. 
\]
Since $(\frac{p}{5})=(\frac{2}{5})=-1$, we have $(\frac{5}{p})=-1$, and so $5$ is a quadratic non-residue in $\mathbb{F}_p$. Then $-5$ is a quadratic residue in $\mathbb{F}_p$ since $p \equiv 3 \pmod 4$. Let $a \in \mathbb{F}_p$ be such that $-5a=1$, thus $a$ is a quadratic residue in $\mathbb{F}_p$. Let $S = \{1,4,4a\}$. Then $S$ is a subset of quadratic residues and $-S$ is a subset of quadratic non-residues. Define the polynomial 
\[f(x)=x^{\frac{p-1}{2}-1}-16ax^2-(4a+5) \in \mathbb{F}_p[x].\]
It is easy to check that $S^{-1} \cup (-S^{-1}) \subseteq Z(f)$. By Theorem \ref{t1}, $|Z(f)| \leq 6$, and so $Z(f) = S^{-1} \cup (-S^{-1})$ and Theorem~\ref{t1} is tight in this case. In particular, when $p=47$, we can take $S=\{1,4,18\}$ and $f(x)=x^{22}+22x^2+24$.
\end{ex}

We can combine Theorem \ref{t1} and the trivial degree bound on $|Z(f)|$ to obtain the following bound, which is independent of the divisor $d$.
\begin{cor}\label{sqrtcor}
Let $\ell \geq 0$ be a nonnegative integer. Suppose $f(x) \in \mathbb{F}_q[x]$ has the form $x^{\frac{q-1}{d} - \ell} + g(x)$, for some $g(x) \in \mathbb{F}_q[x]$ such that $1 \leq g^\circ < \frac{q-1}{d} - \ell$. Then $|Z(f)| \leq \sqrt{(q-1)(\ell+g^\circ)}$.
\end{cor}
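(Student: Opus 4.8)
The plan is to simply combine the two upper bounds already at our disposal. Theorem~\ref{t1} gives $|Z(f)| \le d(\ell + g^\circ)$, while the trivial degree bound gives $|Z(f)| \le f^\circ = \frac{q-1}{d} - \ell \le \frac{q-1}{d}$. The only extra ingredient needed is the elementary fact that the minimum of two nonnegative real numbers is at most their geometric mean: if $0 \le a \le b$ then $\min(a,b) = a = \sqrt{a\cdot a} \le \sqrt{ab}$.

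First I would observe that the hypotheses $1 \le g^\circ < \frac{q-1}{d} - \ell$ ensure that both $d(\ell+g^\circ)$ and $\frac{q-1}{d}$ are strictly positive, so that the right-hand side $\sqrt{(q-1)(\ell+g^\circ)}$ is a well-defined positive real and the product of the two bounds makes sense. Then, applying Theorem~\ref{t1} for one factor and the degree bound for the other, I would write
\[ |Z(f)|^2 \;\le\; \bigl(d(\ell+g^\circ)\bigr)\cdot\Bigl(\tfrac{q-1}{d}\Bigr) \;=\; (q-1)(\ell+g^\circ), \]
and take square roots to conclude $|Z(f)| \le \sqrt{(q-1)(\ell+g^\circ)}$.

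There is no real obstacle here: all the substance lives in Theorem~\ref{t1}, and this corollary is merely the observation that trading off that bound against the degree bound produces an estimate in which the divisor $d$ no longer appears. The only point worth flagging is exactly that independence from $d$ — the particular $d$ for which $f$ has the stated form is used in the proof but has disappeared from the final inequality, which is why it is natural to record this consequence on its own.
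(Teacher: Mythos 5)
Your proof is correct and is essentially identical to the paper's: both combine the bound $|Z(f)| \le d(\ell+g^\circ)$ from Theorem~\ref{t1} with the degree bound $|Z(f)| \le \frac{q-1}{d}$ and take the geometric mean to eliminate $d$. No issues.
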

\begin{proof}
Note that we have the trivial degree bound $|Z(f)|\leq f^\circ\leq \frac{q-1}{d}$, and by Theorem \ref{t1}, $|Z(f)|\leq d(\ell+g^\circ)$. Therefore, $|Z(f)| \leq \sqrt{\frac{q-1}{d} d(\ell+g^\circ)}=\sqrt{(q-1)(\ell+g^\circ)}$.
\end{proof}

In the above discussion, we restricted $\ell$ to be a nonnegative integer. When $\ell<0$, a similar trick leads to the following theorem.

\begin{thm}\label{t2} Let $m \geq 0$ be a nonnegative integer. Suppose $f(x) \in \mathbb{F}_q[x]$ has the form $x^{\frac{q-1}{d} + m} + g(x)$, for some $g(x) \in \mathbb{F}_q[x]$ such that $1 \leq g^\circ < \frac{q-1}{d} +m$. Then $|Z(f)| \leq d \max \{m,g^{ \circ}\}$. 
\end{thm}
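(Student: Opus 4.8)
The plan is to mimic the multiplication trick in the proof of Theorem~\ref{t1}, but to choose the multiplier so as to fold the exponent $\frac{q-1}{d}+m$ back down below $\frac{q-1}{d}$. Since $Z(f)=Z(x^{q-1-m}f(x))$ restricted to $\mathbb{F}_q^*$ — because for $a\in\mathbb{F}_q^*$ we have $a^{q-1}=1$, so $a^{q-1-m}f(a)=a^{\frac{q-1}{d}}+a^{q-1-m}g(a)$ — we are led to study the roots of
\begin{equation*}
x^{\frac{q-1}{d}} + x^{q-1-m} g(x)
\end{equation*}
in $\mathbb{F}_q^*$. As in Theorem~\ref{t1}, a nonzero root of this polynomial is a root of $\xi + x^{q-1-m}g(x)$ for some fixed $\xi\in(\mathbb{F}_q^*)^{\frac{q-1}{d}}$, a set of size $d$. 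The only remaining point is to bound the number of nonzero roots of $\xi + x^{q-1-m}g(x)$: this polynomial has a genuine $x^{q-1-m}$ factor coming from the constant term of $g$ (recall we assume $g$ has nonzero constant term), and after dividing out the common power of $x$ it has degree $\max\{q-1-m+g^\circ,\ 0\}-\min\{q-1-m,\ \text{(lowest exponent of }g)\}$. A cleaner way to see the bound: write $g(x)=\sum_{j=0}^{g^\circ} c_j x^j$; then $\xi+x^{q-1-m}g(x)=\xi + \sum_{j} c_j x^{q-1-m+j}$ has nonzero roots exactly the nonzero roots of $\xi x^{m} + \sum_j c_j x^{q-1+j} \equiv \xi x^m + \sum_j c_j x^{j} = \xi x^m + g(x)$, using $a^{q-1}=1$ once more on $\mathbb{F}_q^*$. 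So the nonzero roots of $\xi + x^{q-1-m}g(x)$ are exactly the nonzero roots of $\xi x^m + g(x)$, a polynomial of degree $\max\{m,g^\circ\}$ (the two leading terms cannot fully cancel unless $m=g^\circ$, in which case the degree is still at most $m=g^\circ$). Hence each $\xi$ contributes at most $\max\{m,g^\circ\}$ roots, and multiplying by $d=|(\mathbb{F}_q^*)^{\frac{q-1}{d}}|$ gives $|Z(f)|\le d\max\{m,g^\circ\}$.

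I would organize the write-up in three short steps: (1) reduce to counting nonzero roots of $x^{\frac{q-1}{d}}+x^{q-1-m}g(x)$ via the substitution $f\mapsto x^{q-1-m}f$ valid on $\mathbb{F}_q^*$; (2) for each $\xi\in(\mathbb{F}_q^*)^{\frac{q-1}{d}}$, identify the nonzero roots of $\xi+x^{q-1-m}g(x)$ with those of $\xi x^m+g(x)$ by again replacing $x^{q-1}$ by $1$ on $\mathbb{F}_q^*$, and observe this polynomial has degree at most $\max\{m,g^\circ\}$ and is not identically zero (its constant term is the nonzero constant term of $g$ when $m>0$, and when $m=0$ it is $\xi+g(x)$ which is nonconstant since $g^\circ\ge 1$); (3) sum over the $d$ choices of $\xi$.

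The only real subtlety — and the step I'd be most careful about — is the bookkeeping in step (2): the exponents $q-1-m+j$ for $0\le j\le g^\circ$ could in principle wrap past $q-1$ (this happens precisely when $j>m$), so one cannot naively "reduce mod $q-1$" inside a single polynomial; rather one must argue rootwise on $\mathbb{F}_q^*$, where $a^{q-1-m+j}=a^{j-m}$ and multiplying through by the unit $a^{m}$ is legitimate. I'd phrase this as: $a\in\mathbb{F}_q^*$ is a root of $\xi+a^{q-1-m}g(a)$ iff $a^{m}\xi + a^{q-1}g(a)=0$ iff $\xi a^{m}+g(a)=0$, so $Z(\xi+x^{q-1-m}g(x))=Z(\xi x^m+g(x))$ as subsets of $\mathbb{F}_q^*$. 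Everything else is routine, and the degenerate case $m=g^\circ$ (possible leading-term cancellation) only helps the bound. The hypothesis $g^\circ\ge 1$ is used to rule out the trivial situation where $\xi x^m+g(x)$ with $m=0$ becomes a nonzero constant.
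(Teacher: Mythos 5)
Your proposal is correct and, despite the detour through multiplying by $x^{q-1-m}$ and then folding back, it lands on exactly the argument the paper uses: for $x\neq 0$ one has $x^{\frac{q-1}{d}+m}=\xi x^m$ with $\xi=x^{\frac{q-1}{d}}\in(\mathbb{F}_q^*)^{\frac{q-1}{d}}$, so $f$ agrees on each of the $d$ cosets with the nonzero polynomial $\xi x^m+g(x)$ of degree at most $\max\{m,g^\circ\}$. The paper simply writes this substitution directly without the intermediate multiplication, so your step (1) is unnecessary, but the proof is the same.
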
 

\begin{proof} If $x \neq 0$, then 
\begin{equation}\label{t2e}  f(x) = \xi x^m + g(x),\end{equation}
for some $\xi \in (\mathbb{F}_q^*)^{\frac{q-1}{d}}$. Since $g^\circ \geq 1$, and the constant term of $g$ is nonzero, then $\xi x^m + g(x)$ is a nonzero polynomial with degree at most $\max\{m,g^\circ\}$, so (\ref{t2e}) has at most $\max\{m,g^\circ\}$ roots. There are $d$ such $\xi \in (\mathbb{F}_q^*)^{\frac{q-1}{d}}$ and so the required bound follows.

\end{proof}

Below is an example when Theorem~\ref{t2} is tight. 

\begin{ex}\rm Let $p$ be a prime $p \equiv 3 \pmod {4}, p>3, d=2, m=1$ and $g^\circ=2$. Let $r_1,r_2 \in \mathbb{F}_p^*$ be any two quadratic residues. Note that $r_1 + r_2 \neq 0$ since $p \equiv 3 \pmod {4}$. Let $a \in \mathbb{F}_p$ be such that $a(r_1+r_2) = -1$. Define the polynomial 
\[ f(x) = x^{\frac{p-1}{2} + 1} + ax^2+ar_1r_2 \in \mathbb{F}_p[x]. \]
It is easy to check that $Z(f) = \{r_1,r_2,-r_1,-r_2\}$.

\end{ex}

We remark that in general the best known bounds on the number of zeros of a trinomial are due to Kelley and Owen~\cite{KO} and recorded in Equation~\eqref{kelleyowen}. Their bounds are on the order of $\sqrt{q}$ in magnitude. For trinomials satisfying the hypotheses of Theorem~\ref{t1} or Theorem~\ref{t2}, the respective theorems offer a significantly better bound on $|Z(f)|$. Examples for which Theorem~\ref{t1} or Theorem~\ref{t2} is tight seem harder to construct for larger $g^\circ$. The example below gives a class of examples for primes $p$ for which $-29$ is a square in $\mathbb{F}_p$.

\begin{ex}\rm Let $p$ be a prime $p \equiv 31 \pmod {116}, d=2, m=1$ and $g^\circ=4$.  Let $S=\{4,9,16,-29\}$, using a similar argument as in Example \ref{ex}, we can show $S$ is a subset of quadratic residues and $-S$ is a subset of quadratic non-residues. Define the polynomial 
\[ f(x) = 6500x^{\frac{p-1}{2} + 1} +(x-4)(x-9)(x-16)(x+29)-6500x \in \mathbb{F}_p[x]. \]
It is easy to check that $Z(f) = S \cup (-S)$.
\end{ex}

In the above example, $f(x)$ is a $4$-sparse polynomial with 8 distinct roots. Kelley's bound in Equation~\eqref{kelley} gives a bound on $|Z(f)|$ on the order $p^{2/3}$. Once again this demonstrates that a sparsity-only bound on $|Z(f)|$ such as~\eqref{kelley} can be significantly improved if $f$ is also lacunary.

We will see that the above two theorems can be combined and iterated to yield a stronger statement on the size of $|Z(f)|$. The coloured regions in Figure~\ref{f1} indicate when the degree bound on $|Z(f)|$ can be improved in terms of $\ell$ and $g^\circ$. We will prove the content of Figure~\ref{f1} in Theorem~\ref{t3}. The numbers on the coloured regions of Figure~\ref{f1} correspond to the cases described in Theorem~\ref{t3}. Next we present a proof of Theorem~\ref{t3}.

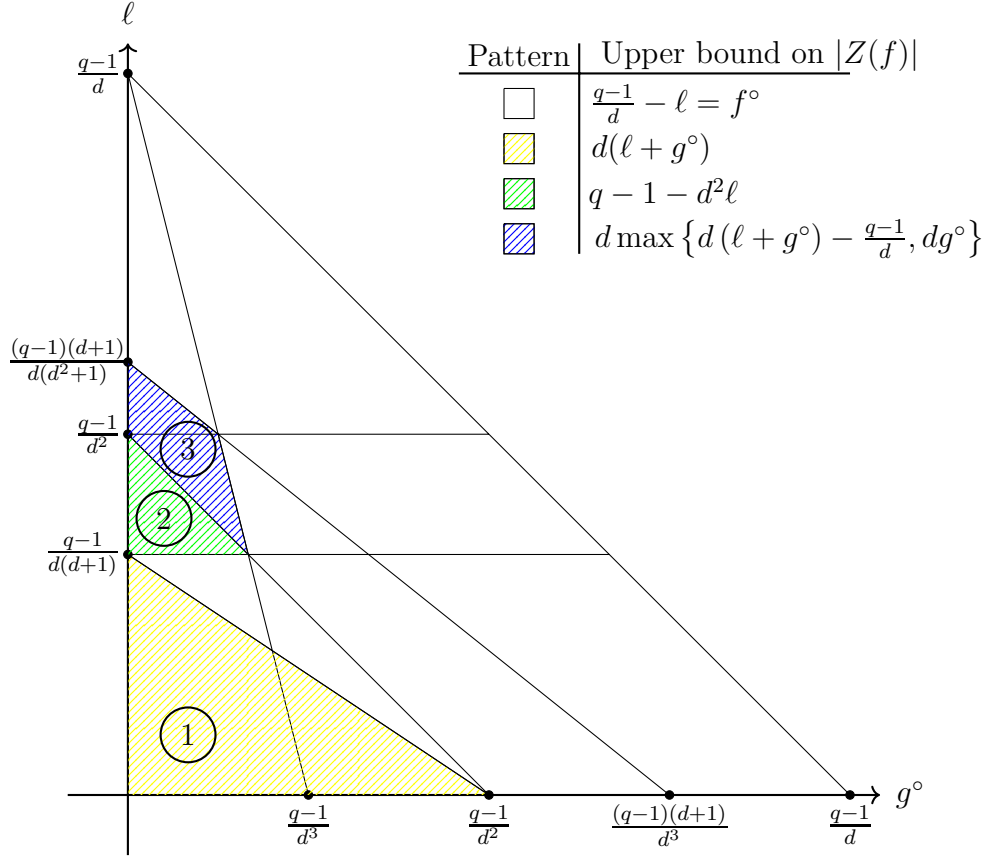
\begin{figure}[h!]
\begin{center}
\begin{tikzpicture}[scale=0.8]

\def\q{120}
\def\d{2}
\def\s{0.2}

\draw[->,thick]

(-1,0)--(12.5,0);

\draw[->,thick]

(0,-1)--(0,12.5);

\draw
(13,0) node{$g^\circ$}
(0,13) node{$\ell$}

(\s*\q/\d,-0.5) node{$\frac{q-1}{d}$}
(-0.5,\s*\q/\d) node{$\frac{q-1}{d}$}

(-1,{\q*\s*(\d+1)/(\d*(\d*\d+1))}) node{$\frac{(q-1)(d+1)}{d(d^2+1)}$}

({\s*\q*(\d+1)/(\d*\d*\d)},-0.5) node{$\frac{(q-1)(d+1)}{d^3}$}

(\s*\q/\d^2,-0.5) node{$\frac{q-1}{d^2}$}
(-0.5,\s*\q/\d^2) node{$\frac{q-1}{d^2}$}

(-0.75,{\q*\s/(\d*(\d+1))}) node{$\frac{q-1}{d(d+1)}$}

(\s*\q/\d^3,-0.5) node{$\frac{q-1}{d^3}$};

\filldraw
(\s*\q/\d,0) circle(2pt)
(0,\s*\q/\d) circle(2pt)

(0,{\q*\s*(\d+1)/(\d*(\d*\d+1))}) circle(2pt)

({\s*\q*(\d+1)/(\d*\d*\d)},0) circle(2pt)

(\s*\q/\d^2,0) circle(2pt)
(0,\s*\q/\d^2) circle(2pt)

(0,{\q*\s/(\d*(\d+1))}) circle(2pt)

(\s*\q/\d^3,0) circle(2pt);

\draw

(\s*\q/\d,0)--(0,\s*\q/\d)
(0,{\q*\s*(\d+1)/(\d*(\d*\d+1))})--({\s*\q*(\d+1)/(\d*\d*\d)},0)
(0,\s*\q/\d^2)--(\s*\q/\d^2,0)
(0,{\q*\s/(\d*(\d+1))})--(\s*\q/\d^2,0)
(0,\s*\q/\d)--(\s*\q/\d^3,0)

(0,{\q*\s/(\d*(\d+1))})--({\s*\q/\d-\q*\s/(\d*(\d+1))},{\q*\s/(\d*(\d+1))})
(0,\s*\q/\d^2)--(\s*\q/\d-\s*\q/\d^2,\s*\q/\d^2);


\filldraw[pattern=north east lines, pattern color=yellow]
(0,0)--(0,{\q*\s/(\d*(\d+1))})--(\s*\q/\d^2,0);

\filldraw[pattern=north east lines, pattern color=green]
(0,{\q*\s/(\d*(\d+1))})--(0,\s*\q/\d^2)--({\s*\q/\d^2-\q*\s/(\d*(\d+1))},{\q*\s/(\d*(\d+1))});

\filldraw[pattern=north east lines, pattern color=blue]
(0,\s*\q/\d^2)--(0,{\q*\s*(\d+1)/(\d*(\d*\d+1))})--({(\s*\q/\d-\s*\q/\d^2)/\d^2},\s*\q/\d^2)--({(\s*\q/\d-\q*\s/(\d*(\d+1)))/\d^2},{\q*\s/(\d*(\d+1))});

\draw (1,1) node[circle,draw, thick]{$1$};
\draw (0.6,4.6) node[circle,draw, thick]{$2$};
\draw (1,5.75) node[circle,draw, thick]{$3$};


\begin{scope}[xshift = 0cm,yshift=7cm]

\draw [thick]
(7.5,5.5)--(7.5,2)
(5.5,5)--(12,5);

\draw
(6.5,5.3) node{Pattern}
(10.5,5.3) node{Upper bound on $|Z(f)|$}
(9.1,4.5) node{$\frac{q-1}{d}- \ell= f^\circ$}
(8.7,3.75) node{$d(\ell+g^\circ)$}
(8.9,3) node{$q-1-d^2\ell$}
(11,2.25) node{$d\max\left\{d\left(\ell + g^\circ \right)- \frac{q-1}{d},dg^\circ\right\}$};

\draw
(6.25,4.75)--(6.75,4.75)--(6.75,4.25)--(6.25,4.25)--(6.25,4.75)
(6.25,4)--(6.75,4)--(6.75,3.5)--(6.25,3.5)--(6.25,4)
(6.25,3.25)--(6.75,3.25)--(6.75,2.75)--(6.25,2.75)--(6.25,3.25)
(6.25,2.5)--(6.75,2.5)--(6.75,2)--(6.25,2)--(6.25,2.5);


\filldraw[pattern=north east lines, pattern color=yellow]
(6.25,4)--(6.75,4)--(6.75,3.5)--(6.25,3.5)--(6.25,4);

\filldraw[pattern= north east lines, pattern color=green]
(6.25,3.25)--(6.75,3.25)--(6.75,2.75)--(6.25,2.75)--(6.25,3.25);

\filldraw[pattern=north east lines, pattern color=blue]
(6.25,2.5)--(6.75,2.5)--(6.75,2)--(6.25,2)--(6.25,2.5);

\end{scope}

\end{tikzpicture}
\end{center}
\caption{Bounding $|Z(f)|$ for $f(x) = x^{\frac{q-1}{d}-\ell} +g(x)$.}\label{f1}
\end{figure}

\vspace{0.2cm}
\noindent\textbf{Theorem~\ref{t3}.} \begin{em}Let $\ell \geq 0$ be a nonnegative integer. Suppose $f(x) \in \mathbb{F}_q[x]$ has the form $x^{\frac{q-1}{d} - \ell} + g(x)$, for some $g(x) \in \mathbb{F}_q[x]$ such that $1 \leq g^\circ < \frac{q-1}{d} - \ell$. If one of the following holds, then $|Z(f)| < f^{\circ}$.

\begin{enumerate}[label=\textbf{(\arabic*)}]

\item $d(d+1)\ell + d^2g^\circ < q-1 $;
\item $d^2(\ell + g^\circ) \leq q-1$ and $d(d+1)\ell>q-1$;
\item $d^2(\ell + g^\circ) > q-1$, $d\ell + d^3 g^\circ < q-1$, and $d(d^2+1) \ell + d^3 g^\circ < (q-1)(d+1)$. 
\end{enumerate}\end{em}

\begin{proof}
By Theorem \ref{t1}, we have $|Z(f)| \leq d(\ell+g^\circ)$, which is an improved bound when $d(\ell+g^\circ)<\frac{q-1}{d}-\ell$, i.e. $d(d+1)\ell + d^2g^\circ < q-1$. \\ If $\ell=0$, then obviously (2) and (3) do not hold. In the following discussion, we assume $\ell>0$. Note the proof of Theorem \ref{t1} shows that all nonzero roots of $f(x)$ are roots of
\begin{equation}\label{eqq}
\prod_{\xi \in (\mathbb{F}_q^*)^{\frac{q-1}{d}}} \big(x^\ell g(x) + \xi\big) = x^{d\ell} g^d(x) - 1,    
\end{equation}
and the constant term of $x^{d\ell} g^d(x) - 1$ is $-1$ since $\ell>0$. By using the substitution $x = y^{-1}$  and multiplying by $-y^{d (\ell + g^\circ)}$ in \eqref{eqq}, we see the number of roots of \eqref{eqq} is the same as the number of nonzero roots of the following monic polynomial
\begin{equation}\label{ff1} h(y)=y^{d(\ell + g^\circ)} - y^{dg^\circ}g^d(y^{-1}).\end{equation}
Note that the degree of $h$ is $d(\ell + g^\circ)$, and the degree of $y^{dg^\circ}g^d(y^{-1})$ is $dg^\circ$. Let $\ell'=\frac{q-1}{d}-d(\ell + g^\circ)$. We consider two cases. 
\begin{itemize}
    \item If $\ell'\geq 0$, then we can apply Theorem \ref{t1} to conclude that $$|Z(f)| \leq |Z(h)| \leq d(\ell'+dg^\circ)=q-1-d^2\ell,$$
which is an improved bound provided $\frac{q-1}{d}-d(\ell + g^\circ) \geq 0$ and $q-1-d^2 \ell<\frac{q-1}{d}-\ell$, i.e. $d^2(\ell + g^\circ) \leq q-1$ and $d(d+1)\ell>q-1$.
\item If $\ell'< 0$, then we can apply Theorem \ref{t2} to show that $$|Z(f)| \leq |Z(h)| \leq d \max\left\{-\ell',dg^\circ\right\}=d\max\left\{d\left(\ell + g^\circ \right) - \frac{q-1}{d},dg^\circ\right\},$$
which is an improved bound provided $\frac{q-1}{d}-d(\ell + g^\circ)< 0$ and \\${d\max\left\{d\left(\ell + g^\circ \right) - \frac{q-1}{d},dg^\circ\right\}<\frac{q-1}{d}-\ell}$, i.e.\\ $d^2(\ell+g^\circ)>q-1$, $d^2(\ell+g^\circ)-(q-1)<\frac{q-1}{d}-\ell$ and $d^2 g^\circ<\frac{q-1}{d}-l$.
\end{itemize}
\end{proof}

Below we give examples of polynomials where $|Z(f)| = f^\circ$, showing limitations of extending Theorem~\ref{t3} for a larger range of $g^\circ,\ell$. 

\begin{ex} \label{ex2} \rm Let $D,n \geq 1$ be positive integers such that $D(n+1) | (q-1)$. Then $x^{D(n+1)}-1=0$ has $D(n+1)$ distinct nonzero solutions. Moreover, 
\begin{equation}\label{degex} f(x) = x^{Dn} + x^{D(n-1)} + \cdots + x^D + 1 = \frac{x^{D(n+1)}-1}{x^D-1},\end{equation}
has $Dn$ distinct nonzero solutions, i.e. we have a class of lacunary polynomials $f(x)$ with $|Z(f)| = f^\circ$. We will compare these examples to Theorem~\ref{t3} in the case $n=d=2$. Let
\[ x^{2D} + x^D + 1 = x^{\frac{q-1}{2}-\ell} + g(x),\]
and so $\ell = (q-1)/2 - 2D$ and $g^\circ = D$. Therefore such examples lie on a line in the $\ell,g^\circ$-axis system used above. In Figure~\ref{degtightfig} we illustrate the relation between this line of examples and the regions of improvement.

\begin{figure}[h!]
\begin{center}
\begin{tikzpicture}[scale=0.5]

\def\q{120}
\def\d{2}
\def\s{0.2}

\draw[->,thick]

(-1,0)--(12.5,0);

\draw[->,thick]

(0,-1)--(0,12.5);

\draw
(13,0) node{$g^\circ$}
(0,13) node{$\ell$}

(\s*\q/\d,-0.85) node{$\frac{q-1}{2}$}
(-0.85,\s*\q/\d) node{$\frac{q-1}{2}$}

(-1.25,{\q*\s*(\d+1)/(\d*(\d*\d+1))}) node{$\frac{3(q-1)}{10}$}

({\s*\q*(\d+1)/(\d*\d*\d)},-0.85) node{$\frac{3(q-1)}{8}$}

(\s*\q/\d^2,-0.85) node{$\frac{q-1}{4}$}
(-0.85,\s*\q/\d^2) node{$\frac{q-1}{4}$}

(-0.85,{\q*\s/(\d*(\d+1))}) node{$\frac{q-1}{6}$}

(\s*\q/\d^3,-0.85) node{$\frac{q-1}{8}$};

\filldraw
(\s*\q/\d,0) circle(2pt)
(0,\s*\q/\d) circle(2pt)

(0,{\q*\s*(\d+1)/(\d*(\d*\d+1))}) circle(2pt)

({\s*\q*(\d+1)/(\d*\d*\d)},0) circle(2pt)

(\s*\q/\d^2,0) circle(2pt)
(0,\s*\q/\d^2) circle(2pt)

(0,{\q*\s/(\d*(\d+1))}) circle(2pt)

(\s*\q/\d^3,0) circle(2pt);

\draw

(\s*\q/\d,0)--(0,\s*\q/\d)
(0,{\q*\s*(\d+1)/(\d*(\d*\d+1))})--({\s*\q*(\d+1)/(\d*\d*\d)},0)
(0,\s*\q/\d^2)--(\s*\q/\d^2,0)
(0,{\q*\s/(\d*(\d+1))})--(\s*\q/\d^2,0)
(0,\s*\q/\d)--(\s*\q/\d^3,0)

(0,{\q*\s/(\d*(\d+1))})--({\s*\q/\d-\q*\s/(\d*(\d+1))},{\q*\s/(\d*(\d+1))})
(0,\s*\q/\d^2)--(\s*\q/\d-\s*\q/\d^2,\s*\q/\d^2);


\filldraw[pattern=north east lines, pattern color=yellow]
(0,0)--(0,{\q*\s/(\d*(\d+1))})--(\s*\q/\d^2,0);

\filldraw[pattern=north east lines, pattern color=green]
(0,{\q*\s/(\d*(\d+1))})--(0,\s*\q/\d^2)--({\s*\q/\d^2-\q*\s/(\d*(\d+1))},{\q*\s/(\d*(\d+1))});

\filldraw[pattern=north east lines, pattern color=blue]
(0,\s*\q/\d^2)--(0,{\q*\s*(\d+1)/(\d*(\d*\d+1))})--({(\s*\q/\d-\s*\q/\d^2)/\d^2},\s*\q/\d^2)--({(\s*\q/\d-\q*\s/(\d*(\d+1)))/\d^2},{\q*\s/(\d*(\d+1))});

\draw[thick, dashed, red]
(0,\s*\q/\d)--({\s*\q/(\d*\d)},0);

\end{tikzpicture}
\end{center}
\caption{Limitations to improving the degree bound. }\label{degtightfig}
\end{figure}

\end{ex}

It seems plausible that Theorem~\ref{t3} could be improved to include more regions that appear left of the red dashed line in Figure~\ref{degtightfig}. We leave this as an open problem. We conclude this section with several generalizations of the above results.

\begin{thm}\label{ratthm} Let $s(x),t(x),g(x) \in \mathbb{F}_q[x]$ be polynomials such that the rational function defined by $r(x) = s(x)/t(x)$ and $g(x)$ are linearly independent. Also let $h(x) \in \mathbb{F}_q[x]$ be a nonconstant polynomial with no zeros in $\mathbb{F}_q^*$. Define the rational function $f(x) = (h(x))^{\frac{q-1}{d}}r(x) + g(x)$. Then the number of distinct nonzero roots of $f(x)$ is at most $d \max\{s^\circ, g^\circ + t^\circ\}$.
\end{thm}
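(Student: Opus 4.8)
The plan is to mimic the proof of Theorem~\ref{t1}, replacing the monomial $x^{(q-1)/d-\ell}$ by the rational function $(h(x))^{(q-1)/d}r(x)$ and exploiting the same coset structure. First I would observe that for any $a \in \mathbb{F}_q^*$, the quantity $h(a)$ is a nonzero element of $\mathbb{F}_q$ (since $h$ has no zeros in $\mathbb{F}_q^*$ and, being nonconstant with integer exponents... more carefully, we are told $h$ has no zeros in $\mathbb{F}_q^*$, so $h(a)\neq 0$), hence $(h(a))^{(q-1)/d} \in (\mathbb{F}_q^*)^{(q-1)/d}$, the unique subgroup of order $d$ in $\mathbb{F}_q^*$. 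Therefore a nonzero root $a$ of $f$ satisfies $\xi\, r(a) + g(a) = 0$ for some $\xi$ in this subgroup of size $d$. Writing $r = s/t$ and clearing denominators, $a$ is a root of $\xi\, s(x) + g(x) t(x)$, provided $t(a) \neq 0$; roots with $t(a)=0$ must be handled separately, but at such a point $r(x)$ has a pole and $f(x)$ is not defined there (or one adopts the convention that these are not counted among the roots of the rational function $f$), so I would simply restrict attention to $a$ with $t(a)\neq 0$, and among those count roots of $P_\xi(x) := \xi\, s(x) + g(x)t(x)$.

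The key remaining point is that each $P_\xi(x)$ is a nonzero polynomial of degree at most $\max\{s^\circ,\ g^\circ + t^\circ\}$. Nonvanishing is exactly where the linear independence hypothesis enters: if $P_\xi \equiv 0$ for some nonzero $\xi$, then $\xi\, s(x) = -g(x)t(x)$ as polynomials, so $\xi\, s(x)/t(x) = -g(x)$, i.e. $\xi\, r(x) + g(x) = 0$ identically, contradicting linear independence of $r$ and $g$ (note $\xi \neq 0$). The degree bound is immediate: $\deg(\xi s) = s^\circ$ when $\xi\neq 0$, and $\deg(g t) = g^\circ + t^\circ$, so $\deg P_\xi \le \max\{s^\circ, g^\circ+t^\circ\}$. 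Hence each $P_\xi$ contributes at most $\max\{s^\circ, g^\circ + t^\circ\}$ roots in $\mathbb{F}_q^*$. Since there are exactly $d = |(\mathbb{F}_q^*)^{(q-1)/d}|$ values of $\xi$, and every nonzero root of $f$ with $t(a)\neq 0$ is a root of $P_\xi$ for one such $\xi$, we obtain $|Z(f)| \le d\max\{s^\circ,\ g^\circ + t^\circ\}$.

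I expect the only genuinely delicate point to be bookkeeping around the poles of $r(x)$ and around how "distinct nonzero roots of the rational function $f(x)$" is defined — one wants to be sure that points where $t(a)=0$ are legitimately excluded, and that common factors of $s$ and $t$ (or between $g$ and $t$) do not spoil the degree count; clearing denominators is harmless for an upper bound since introducing extra roots only helps. Everything else is a direct transcription of the proof of Theorem~\ref{t1}, with "$g^\circ\ge 1$ forces $P_\xi\neq 0$" upgraded to "linear independence forces $P_\xi \neq 0$." I would also remark that Theorem~\ref{t1} is recovered by taking $h(x)=x$, $s(x) = 1$, $t(x)=x^{\ell}$ (so $r(x) = x^{-\ell}$), in which case $\max\{s^\circ, g^\circ+t^\circ\} = \max\{0, g^\circ + \ell\} = \ell + g^\circ$, matching the earlier bound, and linear independence of $x^{-\ell}$ and $g(x)$ holds automatically under the hypothesis $g^\circ \ge 1$.
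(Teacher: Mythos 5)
Your proof is correct and is exactly the argument the paper intends: the paper only sketches this result, remarking that the ideas of Theorems~\ref{t1} and~\ref{t2} carry over and that linear independence is what guarantees $\xi s(x)+t(x)g(x)$ is a nonzero polynomial, which is precisely the polynomial $P_\xi$ you analyze. Your handling of the poles of $r$ and the degree bound $\max\{s^\circ, g^\circ+t^\circ\}$ fills in the sketch faithfully, so there is nothing to add.
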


The ideas behind Theorem~\ref{t1} and Theorem~\ref{t2} can be reused to prove Theorem~\ref{ratthm}. Note that the linearly independent assumption is necessary since we need to ensure $\xi s(x)+t(x)g(x)$ is a nonzero polynomial for $\xi \in (\mathbb{F}_q^*)^{\frac{q-1}{d}}$.  Theorem~\ref{ratthm} is noteworthy since the polynomials $f(x)$ to which it applies may be neither lacunary nor sparse.

The following is a result for $t$-sparse polynomials. It extends Theorem~\ref{t1} and Theorem~\ref{t2}. The proof employs a similar argument to those seen above. Note that Theorem~\ref{intthm} does not require a polynomial with degree close to $\frac{q-1}{d}$.

\begin{thm}\label{intthm} Let 
\[h(x)=\sum_{i=1}^{t} c_ix^{e_i} \in \mathbb{F}_q[x],\]
be a $t$-sparse polynomial. Suppose that there exist integers $a_i,b_i$, $i = 1,2,\ldots,t$ such that
\[e_i=a_i \frac{q-1}{d}+b_i, \text{ where } -\frac{q-1}{d}< b_i <\frac{q-1}{d}.\]
Let $A,B$ be integers such that $\{b_i:1 \leq i \leq t\}$ are contained in the interval $[A,B]$. If $h(x)$ does not vanish on any coset of $(\mathbb{F}_q^\ast)^d$ in $\mathbb{F}_q^\ast$, then $|Z(h)| \leq d(B-A)$. 
\end{thm}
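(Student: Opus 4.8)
The plan is to mimic the product-over-cosets trick used in Theorems~\ref{t1}, \ref{t2}, and \ref{t3}, but now applied to the reduced exponents $b_i$ rather than to the top two terms. First I would write each monomial as $c_i x^{e_i} = c_i x^{a_i (q-1)/d} x^{b_i}$, and observe that for $x \in \mathbb{F}_q^*$ the quantity $x^{a_i(q-1)/d}$ depends only on the coset of $x$ modulo $(\mathbb{F}_q^*)^d$: indeed $x^{(q-1)/d}$ takes a value in $(\mathbb{F}_q^*)^{(q-1)/d}$, a cyclic group of order $d$, so $x^{a_i(q-1)/d} = \xi^{a_i}$ for some $\xi \in (\mathbb{F}_q^*)^{(q-1)/d}$ determined by the coset of $x$. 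Hence on each coset, $h(x)$ agrees with a polynomial of the shape $h_\xi(x) = \sum_{i=1}^t c_i \xi^{a_i} x^{b_i}$, where now the exponents $b_i$ lie in $[A,B]$ but may be negative.

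Next I would clear denominators: multiplying by $x^{-A}$ (equivalently, working with the Laurent polynomial and shifting), $x^A h_\xi(x) = \sum_i c_i \xi^{a_i} x^{b_i - A}$ is an honest polynomial of degree at most $B - A$, and its nonzero roots coincide with the zeros of $h_\xi$ in $\mathbb{F}_q^*$. The key point is that this polynomial is \emph{not identically zero}: if $x^A h_\xi \equiv 0$ then $h_\xi \equiv 0$ as a function on $\mathbb{F}_q^*$, which would force $h$ to vanish identically on the entire coset of $(\mathbb{F}_q^*)^d$ corresponding to $\xi$, contradicting the hypothesis. Therefore each $h_\xi$ has at most $B - A$ zeros in $\mathbb{F}_q^*$. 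Since $\mathbb{F}_q^*$ is partitioned into the $d$ cosets of $(\mathbb{F}_q^*)^d$, and there are exactly $d$ values of $\xi \in (\mathbb{F}_q^*)^{(q-1)/d}$, summing gives $|Z(h)| \leq d(B-A)$.

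The one genuinely delicate step is the correspondence between cosets of $(\mathbb{F}_q^*)^d$ and the values $\xi = x^{(q-1)/d}$, together with checking that distinct $a_i$'s interact correctly: I should verify that the map sending a coset $C$ of $(\mathbb{F}_q^*)^d$ to the common value of $x^{(q-1)/d}$ for $x \in C$ is a well-defined bijection onto $(\mathbb{F}_q^*)^{(q-1)/d}$, so that "$h$ does not vanish on any coset of $(\mathbb{F}_q^*)^d$" is exactly the condition "$h_\xi \not\equiv 0$ for every $\xi$." After that, the degree count is routine. A minor point worth a remark: the reduction $e_i = a_i(q-1)/d + b_i$ with $-\frac{q-1}{d} < b_i < \frac{q-1}{d}$ does not determine $a_i, b_i$ uniquely, but the argument works for any admissible choice, and one is free to choose the $b_i$ to make $B - A$ as small as possible.
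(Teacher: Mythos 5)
Your proposal is correct and follows essentially the same route as the paper: restrict to each of the $d$ cosets of $(\mathbb{F}_q^\ast)^d$, where $x^{(q-1)/d}$ is a constant $\xi$, multiply by $x^{-A}$ to obtain a nonzero polynomial of degree at most $B-A$ in the reduced exponents, and sum over the cosets. (Only a typo to fix: you write $x^{A}h_\xi(x)$ where you mean $x^{-A}h_\xi(x)$.)
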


\begin{proof} Let $\{\xi_1,\ldots,\xi_d\} = (\mathbb{F}_q^\ast)^\frac{q-1}{d}$ and define $S_i = \{ a \in \mathbb{F}_q^\ast \colon a^\frac{q-1}{d} = \xi_i\}$. Note that $S_1,\ldots,S_d$ are the cosets of $\mathbb{F}_q^\ast$ of size $\frac{q-1}{d}$. Fix an $i$ in $[1,t]$. For $y \in S_i$ we have 
\begin{equation}\label{inteq} y^{-A}h(y) =y^{-A}\sum_{i=1}^{t} c_iy^{e_i} =\sum_{i=1}^{t} c_i\xi_i^{a_i}y^{b_i-A}. \end{equation}
Observe that above expression is a polynomial in $y$. Moreover, since $h$ does not vanish on any coset of $\mathbb{F}_q^\ast$ of size $\frac{q-1}{d}$, ~\eqref{inteq} is a nonzero polynomial with degree at most $B-A$. Therefore, for each $i$, $h$ has at most $B-A$ zeros in $S_i$. It follows that $|Z(h)| \leq d(B-A)$. 
\end{proof}

We remark that the assumption that $h(x)$ does not vanish on any coset of $(\mathbb{F}_q^\ast)^d$ in $\mathbb{F}_q^\ast$ is often guaranteed. This is the case for the polynomials we will discuss in the remaining of the section. Below we see an example of Theorem~\ref{intthm} in practice.

\begin{ex}\rm
Let $h(x)=\sum_{j=1}^{m} c_jx^{\frac{q-1}{d_j}}+ax+b$, where $d_j \mid (q-1)$ and $d_j<q-1$ . Take $d=\operatorname{lcm} \{d_1,d_2, \ldots, d_m\}$, then it follows that the interval we obtained is $[A,B]=[0,1]$ since $\frac{q-1}{d}$ divides all the exponents expect the linear term and the constant term. So $|Z(h)| \leq d= \operatorname{lcm} \{d_1,d_2, \ldots, d_m\}$.
\end{ex}

Theorem~\ref{t1} and Theorem~\ref{t2} can be obtained from Theorem~\ref{intthm} in the following way. For Theorem \ref{t1}, we can take the interval $[A,B]$ to be $[-\ell, g^\circ]$. For Theorem \ref{t2}, we can take the interval $[A,B]$ to be $[0, \max\{m, g^\circ\}]$. Theorem~\ref{intthm} is strongest when the remainders of the exponents dividing $\frac{q-1}{d}$ are concentrated in a short interval.

The following is a corollary of Theorem~\ref{intthm}. It generalizes Theorem~\ref{t1}. The point is that if a large gap appears between any consecutive exponents, then an improved bound on the number of distinct roots may be possible.

\begin{cor} Let $h(x)=\sum_{i=1}^{t} c_ix^{e_i} \in \mathbb{F}_q[x]$ be a $t$-sparse polynomial, where $\frac{q-1}{d} \geq e_1>e_2> \cdots >e_t$. Define the gap $\delta$ of $h(x)$ to be the largest difference between consecutive exponents, i.e. $\delta=\max\{e_{i-1}-e_i: 2 \leq i \leq t\}$, then $|Z(h)| \leq q-1-d\delta$.
\end{cor}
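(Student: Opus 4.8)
The plan is to reduce the corollary to Theorem~\ref{intthm} by a suitable choice of the interval $[A,B]$. Let the index $i^\ast \in \{2,\dots,t\}$ be one realizing the maximum gap, so that $e_{i^\ast-1} - e_{i^\ast} = \delta$. The exponents split naturally into a ``high'' block $e_1 > \cdots > e_{i^\ast-1}$ and a ``low'' block $e_{i^\ast} > \cdots > e_t$, with the gap $\delta$ separating the two blocks.

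First I would choose the parameters $a_i, b_i$ in the statement of Theorem~\ref{intthm} so that $e_i = a_i \frac{q-1}{d} + b_i$ with all $b_i$ landing in a window of length $q-1-d\delta$ (once divided by... more precisely, of length $(q-1-d\delta)/1$ measured in the $b_i$ scale, so that $d(B-A) \le q-1-d\delta$). The idea: for the high block, since $e_i \le \frac{q-1}{d}$, write $e_i = 1 \cdot \frac{q-1}{d} + b_i$ with $b_i = e_i - \frac{q-1}{d} \le 0$ (and $b_i > -\frac{q-1}{d}$ because $e_i > 0$); thus the high-block remainders sit in $(-\frac{q-1}{d}, 0]$. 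For the low block, write $e_i = 0 \cdot \frac{q-1}{d} + b_i$ with $b_i = e_i \ge 0$. Now the high-block $b_i$'s lie in $[e_1 - \frac{q-1}{d},\ 0]$ and the low-block $b_i$'s lie in $[0,\ e_{i^\ast}]$, so all $b_i$ lie in $[A,B]$ with $A = e_1 - \frac{q-1}{d}$ and $B = e_{i^\ast}$. Then
\[
B - A = e_{i^\ast} - e_1 + \frac{q-1}{d} = \frac{q-1}{d} - (e_1 - e_{i^\ast}) \le \frac{q-1}{d} - (e_{i^\ast - 1} - e_{i^\ast}) = \frac{q-1}{d} - \delta,
\]
where I used $e_1 \ge e_{i^\ast-1}$ and $e_{i^\ast-1} - e_{i^\ast} = \delta$. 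Applying Theorem~\ref{intthm} gives $|Z(h)| \le d(B-A) \le q - 1 - d\delta$.

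There are two loose ends to address. One is the hypothesis of Theorem~\ref{intthm} that $h$ not vanish on any coset of $(\mathbb{F}_q^\ast)^d$: if $h$ does vanish identically on such a coset $S_j$, then restricting \eqref{inteq} to $S_j$ shows the polynomial $\sum_i c_i \xi_j^{a_i} y^{b_i - A}$ of degree at most $B-A \le \frac{q-1}{d}-\delta < \frac{q-1}{d} = |S_j|$ has $|S_j|$ roots, forcing it to be the zero polynomial; collecting terms with equal $b_i$, and using $\delta \ge 1$ so the window has length $< q-1$ hence genuinely sparse structure is preserved, one deduces a nontrivial $\mathbb{F}_q$-linear dependence among the monomials $x^{e_i}$ after the substitution, which can be ruled out or else the stated bound still holds trivially by $|Z(h)| \le |Z(h/(\text{that coset factor}))| $. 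The cleanest route is probably just to note that if $h$ vanishes on a full coset of size $\frac{q-1}{d}$ then $x^{\frac{q-1}{d}} - \xi_j \mid h$ in a suitable sense is impossible for a genuinely $\delta$-gapped $t$-sparse polynomial with $\delta \ge 1$ unless $h$ is itself of a degenerate form, and handle that degenerate case separately. The second loose end is the edge case $t = 1$ or $e_t = 0$ (constant term), which only shrinks the relevant window, so the bound is unaffected.

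The main obstacle I anticipate is verifying — cleanly — that the non-vanishing hypothesis of Theorem~\ref{intthm} is automatically satisfied (or can be sidestepped) under the sole assumption $\delta \ge 1$. The geometric packing of the $b_i$ into $[A,B]$ is a routine computation once the right ``anchor'' choices $a_i \in \{0,1\}$ are made; the subtlety is purely the coset-vanishing caveat, and I expect the paper's remark that ``this assumption is often guaranteed'' to be invoked, perhaps with the observation that a $t$-sparse polynomial with a gap $\delta \ge 1$ between its top two exponents cannot be divisible by $x^{\frac{q-1}{d}} - \xi$ unless it collapses to an Euler binomial, which then satisfies the bound directly.
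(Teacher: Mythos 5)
Your proof is essentially the paper's own: split the exponents at the maximal gap, assign $a_i=1$ to the high block and $a_i=0$ to the low block, and apply Theorem~\ref{intthm}. One indexing slip: the high-block remainders $b_i=e_i-\frac{q-1}{d}$ decrease with $i$, so they lie in $\bigl[e_{i^\ast-1}-\frac{q-1}{d},\,0\bigr]$, not $\bigl[e_1-\frac{q-1}{d},\,0\bigr]$; the correct left endpoint is $A=e_{i^\ast-1}-\frac{q-1}{d}$, which gives $B-A=\frac{q-1}{d}-\delta$ exactly and the same conclusion $|Z(h)|\le q-1-d\delta$. On the coset-vanishing caveat you raise: since $\deg h\le \frac{q-1}{d}$, vanishing on a full coset forces $h$ to be a scalar multiple of $x^{\frac{q-1}{d}}-\xi$, for which $\delta=\frac{q-1}{d}$, the claimed bound is $0$, yet $|Z(h)|=\frac{q-1}{d}$ --- so this degenerate case does \emph{not} ``satisfy the bound directly'' as you suggest and must simply be excluded; the paper sidesteps this by asserting, just before the corollary, that the non-vanishing hypothesis is guaranteed for the polynomials it considers.
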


\begin{proof}
Suppose $\delta=e_{j-1}-e_j$, then the exponents modulo $\frac{q-1}{d}$ are all contained in the interval $[-(\frac{q-1}{d}-e_{j-1}), e_j]$. By Theorem \ref{intthm}, $|Z(h)| \leq d(e_j+\frac{q-1}{d}-e_{j-1})=q-1-d\delta$.
\end{proof}

We remark that Karpinski and Shparlinski's bound $|Z(h)| \leq \frac{t-1}{t} (q-1)$ given in~\cite{KS} can be recovered from the above corollary by taking $d = 1$.


\section{Iterating to obtain stronger bounds on $|Z(f)|$}\label{seciter}

In this section we build on the ideas presented in the proof of Theorem~\ref{t3}. In particular we iterate the argument of Theorem~\ref{t3} as many times as possible, to yield a stronger bound on $|Z(f)|$. Throughout this section, $d,\ell \geq1$ will be positive integers such that $d|(q-1)$ and $\frac{q-1}{d}-\ell>1$, $g(x) \in \mathbb{F}_q[x]$ will be such that $1 \leq g^\circ < \frac{q-1}{d} - \ell$, $x \nmid g(x)$, and $f(x) \in \mathbb{F}_q[x]$ will be given by
\[ f(x) = x^{\frac{q-1}{d}-\ell} +g(x).\]
Put $\ell_0 = \ell$, and $g_0(x) = g(x)$. For $i \geq 0$, define
\begin{equation}\label{defseq} g_{i+1}(x) = - x^{dg^\circ_i}g^d_i(x^{-1}), \quad \text{and} \quad \ell_{i+1} = \frac{q-1}{d} - d(\ell_i + g_i^\circ).\end{equation}

\begin{lem}\label{manybounds} Let the sequences $\{g_i\}_{i \geq 0}$, $\{\ell_i\}_{i \geq 0}$ be as in~\eqref{defseq}. Suppose that for an integer $k\geq -1$ we have
\[ d(\ell_i+g_i^\circ) < \frac{q-1}{d}, \quad \text{ for } \quad 0 \leq i \leq k.\]
Then 
\[ |Z(f)| \leq \min_{0 \leq i \leq k+1} d(\ell_i+g_i^\circ) .\]

\end{lem}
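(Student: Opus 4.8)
The plan is to iterate the construction in the proof of Theorem~\ref{t3} exactly $k+1$ times, producing a chain of polynomials $f = f_0, f_1, \dots, f_{k+1}$, each of the shape to which Theorem~\ref{t1} applies, and satisfying $|Z(f_0)| \le |Z(f_1)| \le \dots \le |Z(f_{k+1})|$. Applying Theorem~\ref{t1} to $f_j$ for each \emph{individual} $j$ then yields $|Z(f)| \le |Z(f_j)| \le d(\ell_j + g_j^\circ)$, and minimizing over $0 \le j \le k+1$ gives the claim. The case $k = -1$ is vacuous in its hypothesis and is simply Theorem~\ref{t1} applied to $f$ itself, so assume $k \ge 0$.

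First I would set $f_0 = f$ and, for $0 \le i \le k$, define $f_{i+1}(y) = y^{d(\ell_i + g_i^\circ)} + g_{i+1}(y)$ with $g_{i+1}, \ell_{i+1}$ as in~\eqref{defseq}. A short induction records the structural facts that make the iteration legitimate. Since $x \nmid g_0$, each $g_i$ has nonzero constant term, hence $g_{i+1}^\circ = d\, g_i^\circ$ and $x \nmid g_{i+1}$; in particular $g_i^\circ = d^{i} g^\circ \ge 1$ throughout. From the definition $\ell_{i+1} = \tfrac{q-1}{d} - d(\ell_i + g_i^\circ)$ we get $\deg f_{i+1} = d(\ell_i+g_i^\circ) = \tfrac{q-1}{d} - \ell_{i+1}$, so $f_i(x) = x^{(q-1)/d - \ell_i} + g_i(x)$ for every $i$ in range and each $f_{i+1}$ is monic. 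The standing hypothesis $d(\ell_i + g_i^\circ) < \tfrac{q-1}{d}$ for $0 \le i \le k$ says exactly that $\ell_{i+1} > 0$ for those $i$; combined with $\ell_0 = \ell \ge 1$ this gives $\ell_j > 0$ for all $0 \le j \le k+1$, whence $\tfrac{q-1}{d} - \ell_j = d(\ell_{j-1} + g_{j-1}^\circ) > d\, g_{j-1}^\circ = g_j^\circ$ for $1 \le j \le k+1$, while $g_0^\circ < \tfrac{q-1}{d} - \ell_0$ is part of the hypothesis. Hence every $f_j$ with $0 \le j \le k+1$ satisfies the hypotheses of Theorem~\ref{t1}.

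Next I would establish the monotonicity $|Z(f_i)| \le |Z(f_{i+1})|$ for $0 \le i \le k$, which is precisely the mechanism in the proof of Theorem~\ref{t3}. Any nonzero root $a$ of $f_i$ satisfies $a^{\ell_i} g_i(a) + a^{(q-1)/d} = 0$ with $a^{(q-1)/d} \in (\mathbb{F}_q^\ast)^{(q-1)/d}$, so $a$ is a root of $\prod_{\xi \in (\mathbb{F}_q^\ast)^{(q-1)/d}} (x^{\ell_i} g_i(x) + \xi) = x^{d\ell_i} g_i^d(x) - 1$ (this is~\eqref{eqq} with $(\ell,g)$ replaced by $(\ell_i,g_i)$), giving $|Z(f_i)| \le |Z(x^{d\ell_i} g_i^d(x) - 1)|$. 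Since $\ell_i > 0$, this polynomial has nonzero constant term, and applying the bijection $x \mapsto y^{-1}$ of $\mathbb{F}_q^\ast$ and then multiplying by $-y^{d(\ell_i + g_i^\circ)}$ turns it into the monic polynomial $y^{d(\ell_i + g_i^\circ)} + g_{i+1}(y) = f_{i+1}(y)$; the substitution preserves the number of nonzero roots, so $|Z(x^{d\ell_i} g_i^d(x) - 1)| = |Z(f_{i+1})|$ and therefore $|Z(f_i)| \le |Z(f_{i+1})|$.

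Putting these together, for each fixed $j$ with $0 \le j \le k+1$ we chain the inequalities to get $|Z(f)| = |Z(f_0)| \le |Z(f_1)| \le \dots \le |Z(f_j)|$, and then Theorem~\ref{t1} applied to $f_j$ (valid by the second paragraph) gives $|Z(f_j)| \le d(\ell_j + g_j^\circ)$; taking the minimum over $j$ finishes the proof. I expect the only delicate part to be the bookkeeping in the second paragraph — checking that the positivity/degree conditions required by Theorem~\ref{t1} genuinely survive all $k+1$ steps of the iteration — and that is exactly where the running hypothesis $d(\ell_i + g_i^\circ) < \tfrac{q-1}{d}$ for $0 \le i \le k$ is used.
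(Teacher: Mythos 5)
Your proof is correct and follows essentially the same route as the paper's: iterate the coset-product and inversion construction to obtain the chain $|Z(f_0)|\le |Z(f_1)|\le \cdots \le |Z(f_j)|$ and then bound the last term. The only cosmetic differences are that you invoke Theorem~\ref{t1} on $f_j$ where the paper instead uses the trivial degree bound on $f_{j+1}$ (the same estimate, since $f_{j+1}^\circ = d(\ell_j+g_j^\circ)$), and that you verify the hypotheses of Theorem~\ref{t1} at each stage more explicitly than the paper does.
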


\begin{proof} 
The case $k=-1$ is Theorem~\ref{t1}. Suppose the hypothesis of the theorem holds for $k \geq 0$. Put
\[ f_i(x) = x^{\frac{q-1}{d}- \ell_i} + g_i(x).\]
Note that $f_0 = f$. Fix any $i$, $0 \leq i \leq k+1$. If $i=0$, then $\ell_i >0$ by definition. For $i \geq 1$, since $d(\ell_{i-1} + g_{i-1}^\circ)< \frac{q-1}{d}$, we again have $\ell_i > 0$. A nonzero root of $f_i(x)$ is a root of $x^{\ell_i}f_i(x)$, and therefore is also a root of 
\[ \prod_{\xi \in (\mathbb{F}_q^*)^{\frac{q-1}{d}}} \big(x^{\ell_i} g_i(x) + \xi\big) = x^{d\ell_i} g^d_i(x) - 1. \]
Substituting $x=y^{-1}$ and multiplying by $-y^{d(\ell_i + g^\circ_i)}$ in the above gives the polynomial
\[ y^{d(\ell_i+g^\circ_i)} - y^{dg_i^\circ} g_i^d(y^{-1}) = f_{i+1}(y). \]
Therefore $|Z(f_{i+1})| \geq |Z(f_i)|$. Since $|Z(f_{i+1})| \leq f^\circ_{i+1}$, we have the desired result. 

\end{proof}

Lemma~\ref{manybounds} potentially provides many upper bounds on $|Z(f)|$. To aid with determining the best bound, we use explicit formulae for the sequences $\{g_i^\circ\}, \{\ell_i+g_i^\circ\}$. 

\begin{lem} \label{formulae} The sequences $\{g_i^\circ\}$ and $\{\ell_i+g^\circ_i\}$ are given by
\[  g_i^\circ = d^i g^\circ , \]
and \
\[  \ell_i + g_i^\circ =\begin{cases} d^i(\ell + g^\circ) -\frac{q-1}{d(d+1)}(d^i -1) & \text{ if } i \text{ is even;}\\  -d^i\ell + \frac{q-1}{d(d+1)}(d^i + 1) & \text{ if } i \text{ is odd.}\end{cases} \]

\end{lem}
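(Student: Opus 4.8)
The plan is to prove both formulae by induction on $i$, using the recursions in~\eqref{defseq}. The claim $g_i^\circ = d^i g^\circ$ is immediate: by definition $g_{i+1}(x) = -x^{dg_i^\circ} g_i^d(x^{-1})$, and since $x \nmid g_i(x)$ (which should be checked inductively — the constant term of $g_i$ is nonzero, so $g_i^d(x^{-1})$ has a term of degree $-0$, and the highest-degree term of $g_i(x^{-1})$ is of degree $-g_i^\circ$, hence after multiplying by $x^{dg_i^\circ}$ the polynomial $g_{i+1}$ has degree exactly $dg_i^\circ$), we get $g_{i+1}^\circ = dg_i^\circ$, and the formula follows with base case $g_0^\circ = g^\circ$.

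Next I would turn to the recursion for $\ell_i + g_i^\circ$. From~\eqref{defseq}, $\ell_{i+1} = \frac{q-1}{d} - d(\ell_i + g_i^\circ)$, so adding $g_{i+1}^\circ = d g_i^\circ$ to both sides gives the clean one-step recursion
\[ \ell_{i+1} + g_{i+1}^\circ = \frac{q-1}{d} - d(\ell_i + g_i^\circ) + d g_i^\circ = \frac{q-1}{d} - d\ell_i. \]
This still mixes $\ell_i$ and $\ell_i + g_i^\circ$, so I would instead write $\ell_i = (\ell_i + g_i^\circ) - g_i^\circ = (\ell_i + g_i^\circ) - d^i g^\circ$ and substitute, obtaining a recursion purely in terms of $u_i := \ell_i + g_i^\circ$:
\[ u_{i+1} = \frac{q-1}{d} - d\big(u_i - d^i g^\circ\big) = \frac{q-1}{d} - d u_i + d^{i+1} g^\circ. \]
Hmm — this reintroduces $g^\circ$; it would be cleaner to instead find a two-step recursion, since the formula alternates by parity. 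Applying the one-step relation $u_{i+1} = \frac{q-1}{d} - d\ell_i$ twice, together with $\ell_{i+1} = u_{i+1} - d^{i+1}g^\circ$, I would derive $u_{i+2} = \frac{q-1}{d} - d\ell_{i+1} = \frac{q-1}{d} - d u_{i+1} + d^{i+2} g^\circ = \frac{q-1}{d} - d\left(\frac{q-1}{d} - d\ell_i\right) + d^{i+2}g^\circ$. Since $d\ell_i = du_i - d^{i+1}g^\circ$, this gives $u_{i+2} = \frac{q-1}{d} - (q-1) + d^2 u_i - d^{i+2}g^\circ + d^{i+2}g^\circ = d^2 u_i - (q-1)\frac{d-1}{d}$. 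So $u_{i+2} = d^2 u_i - (q-1)(d-1)/d$, a homogeneous-plus-constant linear recursion whose solution is $u_i = d^{2i}\big(u_0 - c\big) + c$ within each parity class, where $c$ is the fixed point $c = \frac{(q-1)(d-1)/d}{d^2-1} = \frac{q-1}{d(d+1)}$.

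From here it is routine to read off the two cases. For $i$ even, the base point of the parity class is $u_0 = \ell + g^\circ$, giving $u_i = d^i(\ell+g^\circ - \frac{q-1}{d(d+1)}) + \frac{q-1}{d(d+1)} = d^i(\ell+g^\circ) - \frac{q-1}{d(d+1)}(d^i - 1)$, as claimed. For $i$ odd, I would compute $u_1 = \frac{q-1}{d} - d\ell_0 = \frac{q-1}{d} - d\ell$ from the one-step relation, then use $u_i = d^{i-1}(u_1 - \frac{q-1}{d(d+1)}) + \frac{q-1}{d(d+1)}$; a short simplification of $u_1 - \frac{q-1}{d(d+1)} = \frac{q-1}{d}\cdot\frac{1}{d+1} \cdot d - d\ell$ wait — let me just record that $u_1 - \frac{q-1}{d(d+1)} = \frac{(q-1)}{d(d+1)} \cdot d \cdot \frac{1}{1} - d\ell$; carrying this through yields $u_i = -d^i \ell + \frac{q-1}{d(d+1)}(d^i+1)$ for odd $i$. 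I do not anticipate a serious obstacle here; the only points requiring a little care are (a) justifying that $\deg g_{i+1} = d g_i^\circ$ exactly, which needs the nonvanishing constant term of $g_i$ and hence an inductive check that $x \nmid g_i$, and (b) bookkeeping the two parity classes correctly when unwinding the two-step recursion. Both are mechanical once the recursion $u_{i+2} = d^2 u_i - (q-1)(d-1)/d$ is in hand.
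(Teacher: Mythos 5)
Your proposal is correct and follows essentially the same route as the paper: both arguments get $g_{i+1}^\circ = d g_i^\circ$ from the nonvanishing constant term of $g_i$ and then solve the linear recursion $\ell_{i+1}+g_{i+1}^\circ = \frac{q-1}{d}-d\ell_i$. The only difference is in the bookkeeping for solving that recursion --- the paper normalizes by $d^i$ and telescopes an alternating sum, while you compose the recursion with itself to get $u_{i+2}=d^2u_i-(q-1)(d-1)/d$ and solve via the fixed point $\frac{q-1}{d(d+1)}$ --- and both yield the stated formulae.
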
 

\begin{proof}
From the assumption that $g(x)$ has a nonzero constant term and the recurrence relation $g_{i+1}(x) = - x^{dg^\circ}g^d_i(x^{-1})$, the first statement immediately follows. For the second statement, note that $\ell_{i+1} = \frac{q-1}{d} - d(\ell_i + g_i^\circ)$ implies that
$$
\ell_{i+1}+g_{i+1}^\circ = \frac{q-1}{d} - d\ell_i=\frac{q-1}{d} - d(\ell_i+g_i^\circ)+d^{i+1}g^\circ.
$$
Dividing $d^{i+1}$ on both sides yields
$$
\frac{\ell_{i+1}+g_{i+1}^\circ}{d^{i+1}} = \frac{q-1}{d^{i+2}} - \frac{\ell_i+g_i^\circ}{d^i}+g^\circ.
$$
Now if we set $a_i=\frac{\ell_i+g_i^\circ}{d^i}$ for $i \geq 0$, then we get $a_0=\ell+g^\circ$, and $a_{i+1}+a_i=\frac{q-1}{d^{i+2}}+g^\circ$ for $i \geq 0$. If $i$ is odd, then 
\begin{align*}a_i&=(a_i+a_{i-1})-(a_{i-1}+a_{i-2})+\cdots +(a_1+a_0)-a_0\\
&=-\ell+\frac{(q-1)(1-d)(1+d^{-i})}{d(1-d^2)}=-\ell+\frac{(q-1)(1+d^{-i})}{d(d+1)}.\\
\end{align*}
If $i$ is even, then \begin{align*}a_i&=(a_i+a_{i-1})-(a_{i-1}+a_{i-2})+\cdots +(a_2+a_1)-(a_1+a_0)+a_0\\
&=\ell+g^\circ+\frac{(q-1)(1-d)(1-d^{-i})}{d(1-d^2)}=\ell+g^\circ+\frac{(q-1)(1-d^{-i})}{d(d+1)}.\end{align*} Now applying the relation $\ell_i+g_i^\circ=d^i a_i$ gives the required result. 
\end{proof}

From now on, we will use Lemma~\ref{formulae} without explicitly saying so. If $d = 1$, then the sequence $\{\ell_i + g_i^\circ\}$ oscillates between the values $\ell + g^\circ$ and $q-1-\ell$, and so nothing is gained by considering later terms in $\{d(\ell_i+g_i^\circ)\}$. We will only consider $d\geq 2$. Now better estimates of $|Z(f)|$ may appear later in the sequence $\{d(\ell_i + g_i^\circ)\}$. For example, the first five terms of $\{d(\ell_i + g_i^\circ)\}$ are 
\begin{equation*}
    d(\ell + g^\circ), \ q-1-d^2 \ell,\  d^3(\ell + g^\circ) - (q-1)(d-1),
\end{equation*} 
\begin{equation}\label{fivebounds}
(q-1)(d^2-d+1)-d^4\ell,\ d^5(\ell + g^\circ)-(q-1)(d^3-d^2+d-1).\end{equation}

In Figure~\ref{fiveboundsfig} we illustrate which bound in Equation~\eqref{fivebounds} is best when it is applicable.

\begin{figure}[h!]
\begin{center}
\begin{tikzpicture}[scale=0.8]

\def\q{120}
\def\d{1.6}
\def\s{0.25}

\draw[->,thick]

(-1,0)--(12.5,0);

\draw[->,thick]

(0,-1)--(0,12.5);

\draw
(13,0) node{$g^\circ$}
(0,13) node{$\ell$}

({\s*\q/(\d*\d)},-0.5) node{$\frac{q-1}{d^2}$}

(-0.5,{\s*\q/(\d*\d)}) node{$\frac{q-1}{d^2}$}

(-1.3,{\s*\q*(\d*\d*\d+1)/(\d^4*(\d+1))}) node{$\frac{(q-1)(d^3+1)}{d^4(d+1)}$}

(-0.75,{\q*\s/(\d*(\d+1))}) node{$\frac{q-1}{d(d+1)}$}

(-1.2, {\s*\q*(\d^4-1)/(\d^5*(\d+1))}) node{$\frac{(q-1)(d^4-1)}{d^5(d+1)}$}

(-1.2, {\s*\q*(\d^3-\d)/(\d^4*(\d+1))}) node{$\frac{(q-1)(d^3-d)}{d^4(d+1)}$}


({\q*\s/(\d*(\d+1))},-0.5) node{$\frac{q-1}{d(d+1)}$}

({\s*\q*(\d*\d*\d+1)/(\d^4*(\d+1))},-0.5) node{$\frac{(q-1)(d^3+1)}{d^4(d+1)}$};

\filldraw

({\s*\q/(\d*\d)},0) circle(2pt)
(0,{\s*\q/(\d*\d)}) circle(2pt)
(0,{\s*\q*(\d*\d*\d+1)/(\d^4*(\d+1))}) circle (2pt)
(0,{\q*\s/(\d*(\d+1))}) circle (2pt)
(0, {\s*\q*(\d^4-1)/(\d^5*(\d+1))}) circle (2pt)
(0, {\s*\q*(\d^3-\d)/(\d^4*(\d+1))}) circle (2pt)
({\q*\s/(\d*(\d+1))},0) circle (2pt)
({\s*\q*(\d*\d*\d+1)/(\d^4*(\d+1))},0) circle (2pt);

\draw
({\s*\q/(\d*\d)},0)--(0,{\s*\q/(\d*\d)})
(0,{\q*\s/(\d*(\d+1))})--({\q*\s/(\d*(\d+1))},0)

(0,{\s*\q*(\d*\d*\d+1)/(\d^4*(\d+1))})--({\s*\q*(\d*\d*\d+1)/(\d^4*(\d+1))-\q*\s/(\d*(\d+1))},{\q*\s/(\d*(\d+1))})

(0,{\q*\s/(\d*(\d+1))})--({\s*\q/(\d*\d)},0)

(0,{\q*\s/(\d*(\d+1))})--({\s*\q/(\d*\d)-\q*\s/(\d*(\d+1))},{\q*\s/(\d*(\d+1))})

(0, {\s*\q*(\d^4-1)/(\d^5*(\d+1))})--({\q*\s/(\d*(\d+1))-\s*\q*(\d^4-1)/(\d^5*(\d+1))},{\s*\q*(\d^4-1)/(\d^5*(\d+1))})

(0, {\s*\q*(\d^3-\d)/(\d^4*(\d+1))})--({\q*\s/(\d*(\d+1))-\s*\q*(\d^3-\d)/(\d^4*(\d+1))},{\s*\q*(\d^3-\d)/(\d^4*(\d+1))});

\draw[dashed]
({\s*\q*(\d*\d*\d+1)/(\d^4*(\d+1))-\q*\s/(\d*(\d+1))},{\q*\s/(\d*(\d+1))})--({\s*\q*(\d*\d*\d+1)/(\d^4*(\d+1))},0);

\filldraw[pattern=north east lines, pattern color=yellow]
(0,{\q*\s/(\d*(\d+1))})--({\q*\s/(\d*(\d+1))-\s*\q*(\d^3-\d)/(\d^4*(\d+1))},{\s*\q*(\d^3-\d)/(\d^4*(\d+1))})--(0, {\s*\q*(\d^3-\d)/(\d^4*(\d+1))})--(0,0)--({\s*\q/(\d*\d)},0)--(0,{\q*\s/(\d*(\d+1))});

\filldraw[pattern=north east lines, pattern color=green]
(0,{\s*\q/(\d*\d)})--(0,{\s*\q*(\d*\d*\d+1)/(\d^4*(\d+1))})--({\s*\q*(\d*\d*\d+1)/(\d^4*(\d+1))-\q*\s/(\d*(\d+1))},{\q*\s/(\d*(\d+1))})--({\s*\q/(\d*\d)-\q*\s/(\d*(\d+1))},{\q*\s/(\d*(\d+1))})--(0,{\s*\q/(\d*\d)});

\filldraw[pattern=north east lines, pattern color=purple]
(0,{\q*\s/(\d*(\d+1))})--(0,{\s*\q*(\d*\d*\d+1)/(\d^4*(\d+1))})--({\s*\q*(\d*\d*\d+1)/(\d^4*(\d+1))-\q*\s/(\d*(\d+1))},{\q*\s/(\d*(\d+1))});

\filldraw[pattern=north east lines, pattern color=orange]
(0, {\s*\q*(\d^3-\d)/(\d^4*(\d+1))})--(0, {\s*\q*(\d^4-1)/(\d^5*(\d+1))})--({\q*\s/(\d*(\d+1))-\s*\q*(\d^4-1)/(\d^5*(\d+1))},{\s*\q*(\d^4-1)/(\d^5*(\d+1))})--({\q*\s/(\d*(\d+1))-\s*\q*(\d^3-\d)/(\d^4*(\d+1))},{\s*\q*(\d^3-\d)/(\d^4*(\d+1))});

\filldraw[pattern=north east lines, pattern color=blue]
(0,{\q*\s/(\d*(\d+1))})--(0, {\s*\q*(\d^4-1)/(\d^5*(\d+1))})--({\q*\s/(\d*(\d+1))-\s*\q*(\d^4-1)/(\d^5*(\d+1))},{\s*\q*(\d^4-1)/(\d^5*(\d+1))});



\begin{scope}[xshift = -1cm,yshift=7cm]

\draw [thick]
(7.5,5.5)--(7.5,0)
(5.5,5)--(14,5);

\draw
(6.5,5.3) node{Pattern}
(11.5,5.3) node{Best applicable bound on $|Z(f)|$}
(9.1,4.5) node{$\frac{q-1}{d}- \ell= f^\circ$}
(8.7,3.75) node{$d(\ell+g^\circ)$}
(9,3) node{$q-1-d^2\ell$}
(10.75,2.25) node{$d^3(\ell + g^\circ) - (q-1)(d-1)$}
(10.5,1.5) node{$(q-1)(d^2-d+1)-d^4\ell$}
(11.8,0.75) node{$d^5(\ell + g^\circ)-(q-1)(d^3-d^2+d-1)$};

\draw
(6.25,4.75)--(6.75,4.75)--(6.75,4.25)--(6.25,4.25)--(6.25,4.75)
(6.25,4)--(6.75,4)--(6.75,3.5)--(6.25,3.5)--(6.25,4)
(6.25,3.25)--(6.75,3.25)--(6.75,2.75)--(6.25,2.75)--(6.25,3.25)
(6.25,2.5)--(6.75,2.5)--(6.75,2)--(6.25,2)--(6.25,2.5)
(6.25,1.75)--(6.75,1.75)--(6.75,1.25)--(6.25,1.25)--(6.25,1.75)
(6.25,1)--(6.75,1)--(6.75,0.5)--(6.25,0.5)--(6.25,1);


\filldraw[pattern=north east lines, pattern color=yellow]
(6.25,4)--(6.75,4)--(6.75,3.5)--(6.25,3.5)--(6.25,4);

\filldraw[pattern= north east lines, pattern color=green]
(6.25,3.25)--(6.75,3.25)--(6.75,2.75)--(6.25,2.75)--(6.25,3.25);

\filldraw[pattern=north east lines, pattern color=orange]
(6.25,2.5)--(6.75,2.5)--(6.75,2)--(6.25,2)--(6.25,2.5);

\filldraw[pattern=north east lines, pattern color=purple]
(6.25,1.75)--(6.75,1.75)--(6.75,1.25)--(6.25,1.25)--(6.25,1.75);

\filldraw[pattern=north east lines, pattern color=blue]
(6.25,1)--(6.75,1)--(6.75,0.5)--(6.25,0.5)--(6.25,1);

\end{scope}

\end{tikzpicture}
\end{center}
\caption{Comparing the six bounds in \eqref{fivebounds} for $|Z(f)|$}\label{fiveboundsfig}
\end{figure}
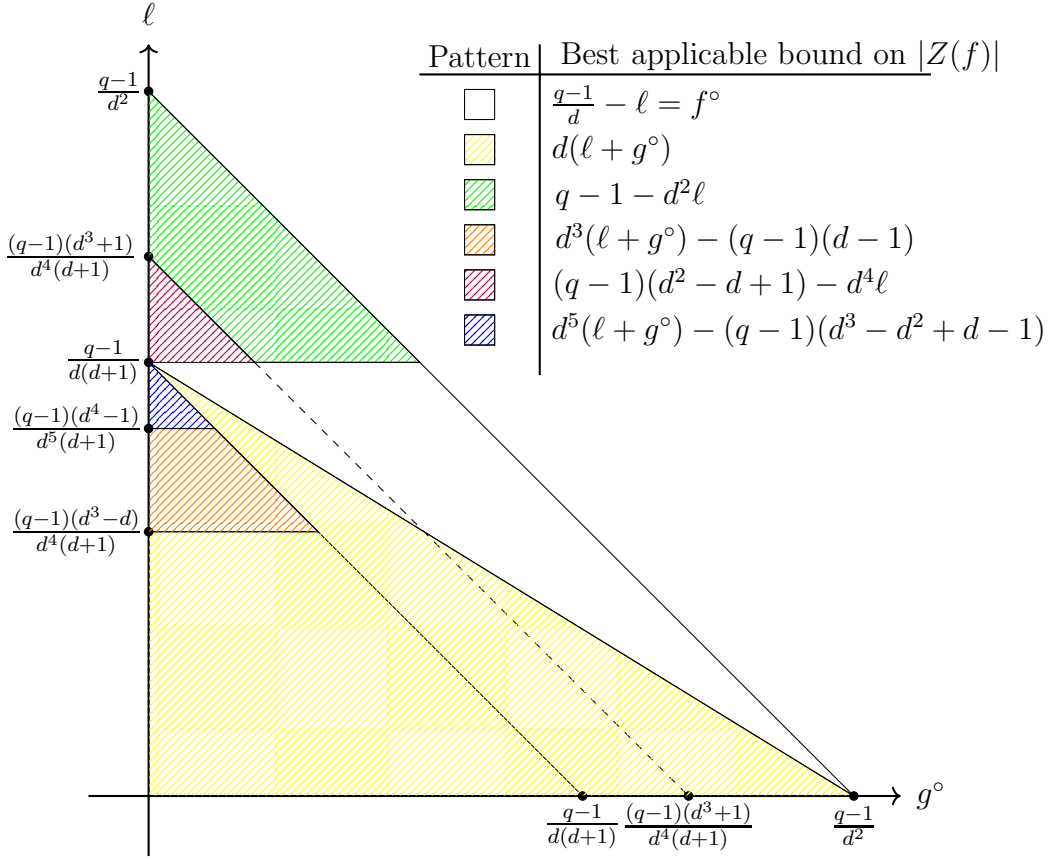

The formulae in Lemma \ref{formulae} can be used to determine the minimum $d(\ell_i + g_i^\circ)$.

\begin{lem} \label{compare} \begin{enumerate}[label=\textbf{(\arabic*)}]
\item If $\ell > \frac{q-1}{d(d+1)}$, then for all $i > j \geq 0$ and $t \geq 0$,
\[ \ell_{2i+1} + g^\circ_{2i+1} < \ell_{2j+1} + g_{2j+1}^\circ < \frac{q-1}{d}-\ell , \quad  \text{and} \quad \ell_{2i+1} + g^\circ_{2i+1} < \ell_{2t} + g_{2t}^\circ.  \]

\item If $\ell +g^\circ < \frac{q-1}{d(d+1)}$, then for all $i > j \geq 0$ and $t \geq 0$,
\[ \ell_{2i} + g^\circ_{2i} < \ell_{2j} + g_{2j}^\circ < \frac{q-1}{d}-\ell , \quad  \text{and} \quad \ell_{2i} + g^\circ_{2i} < \ell_{2t+1} + g_{2t+1}^\circ.  \]
 
\item If $\ell \leq  \frac{q-1}{d(d+1)}$, and $\ell +g^\circ \geq \frac{q-1}{d(d+1)}$, then for all $i > j \geq 0$,
\[ \ell_{2i} + g^\circ_{2i} \geq \ell_{2j} + g_{2j}^\circ , \quad  \text{and} \quad \ell_{2i+1} + g^\circ_{2i+1} \geq \ell_{2j+1} + g_{2j+1}^\circ.  \]

\end{enumerate}

\end{lem}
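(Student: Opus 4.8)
The plan is to feed the closed forms of Lemma~\ref{formulae} into elementary comparisons of geometric progressions. Set $Q = \frac{q-1}{d(d+1)}$, so that $\frac{q-1}{d} = (d+1)Q$, and rewrite the formula for $\ell_i + g_i^\circ$ in the uniform shape
\[ \ell_i + g_i^\circ = d^i \varepsilon_i + Q, \qquad \varepsilon_i = \begin{cases} (\ell+g^\circ) - Q & \text{if } i \text{ is even},\\ Q - \ell & \text{if } i \text{ is odd}.\end{cases}\]
Because $d \geq 2$ throughout this section, the factor $d^i$ strictly increases along the even indices and along the odd indices separately; hence, on each parity class, both the monotonicity of $\{\ell_i + g_i^\circ\}$ and its position relative to the fixed value $Q$ are governed solely by the sign of $\varepsilon_i$.

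With this reparametrisation in hand, Case~(3) is immediate: the hypotheses $\ell \le Q \le \ell+g^\circ$ say precisely that $\varepsilon_i \ge 0$ for every $i$ of either parity, whence $d^i\varepsilon_i$ is non-decreasing within each parity class and the two asserted inequalities follow at once. For Case~(1), the assumption $\ell > Q$ makes $\varepsilon_{2i+1} = Q - \ell < 0$, so the odd-indexed terms strictly decrease (giving the first inequality) and each is $< Q$; meanwhile $\ell+g^\circ > Q + g^\circ \ge Q+1 > Q$ (using $g^\circ \ge 1$), so $\varepsilon_{2t} > 0$ and each even-indexed term is $> Q$, whence $\ell_{2i+1}+g_{2i+1}^\circ < Q < \ell_{2t}+g_{2t}^\circ$. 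The remaining bound $\ell_{2j+1}+g_{2j+1}^\circ < \frac{q-1}{d}-\ell$ reduces, since the odd terms decrease, to the case $j=0$, where $\ell_1+g_1^\circ = (d+1)Q - d\ell < (d+1)Q - \ell = \frac{q-1}{d}-\ell$ because $(d-1)\ell > 0$. Case~(2) is the mirror image: $\ell+g^\circ < Q$ forces $\varepsilon_{2i} < 0$, so the even terms strictly decrease and stay below $Q$, while $\varepsilon_{2i+1} = Q - \ell \ge Q - (\ell+g^\circ) > 0$ puts every odd term above $Q$; the cross-inequality and the monotonicity follow, and $\ell_{2j}+g_{2j}^\circ \le \ell_0 + g_0^\circ = \ell+g^\circ < \frac{q-1}{d}-\ell$ reduces to $2\ell + g^\circ < (d+1)Q$, which holds since $\ell < Q$ and $\ell + g^\circ < Q$ give $2\ell + g^\circ < 2Q < (d+1)Q$.

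I do not anticipate a real obstacle: every step is a one-line inequality, and the only genuine care needed is bookkeeping — tracking which parity class carries which $\varepsilon_i$, invoking $d \ge 2$ and $\ell, g^\circ \ge 1$ exactly where the strict inequalities demand it, and observing that the extremal odd (resp.\ even) term in each asserted chain is the one of least admissible index, namely index $1$ (resp.\ index $0$). Once the identity $\ell_i + g_i^\circ = d^i\varepsilon_i + Q$ is recorded, all three cases are handled essentially in parallel.
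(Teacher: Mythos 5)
Your proposal is correct and follows essentially the same route as the paper: both rewrite the closed forms of Lemma~\ref{formulae} as $\ell_i+g_i^\circ = \tfrac{q-1}{d(d+1)} + d^i\varepsilon_i$ with $\varepsilon_i$ depending only on the parity of $i$, and then read off monotonicity and position relative to $\tfrac{q-1}{d(d+1)}$ from the sign of $\varepsilon_i$ in the three cases. If anything, your write-up is slightly more explicit than the paper's in verifying the cross-parity comparisons and the bound $\ell_1+g_1^\circ < \tfrac{q-1}{d}-\ell$, but the underlying argument is identical.
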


\begin{proof}
By Lemma \ref{formulae}, for each $i \geq 0$, we have
$$\ell_{2i} +g_{2i}^\circ=\frac{q-1}{d(d+1)}+d^{2i}\bigg(\ell+g^\circ-\frac{q-1}{d(d+1)}\bigg), \quad \ell_{2i+1} + g_{2i+1}^\circ=\frac{q-1}{d(d+1)}+d^{2i+1}\bigg(\frac{q-1}{d(d+1)}-\ell\bigg).$$
To determine the monotonicity of the two sequences, it suffices to compare the size of $\ell+g^\circ,\ell$ and $\frac{q-1}{d(d+1)}$. Therefore, there are the following three cases:
\begin{enumerate}[label=\textbf{(\arabic*)}]
    \item  If $\ell > \frac{q-1}{d(d+1)}$, then the sequence $\{\ell_{2i} + g_{2i}^\circ\}$ is strictly increasing, the sequence $\{\ell_{2i+1} +g_{2i+1}^\circ\}$ is strictly decreasing, and $\ell_{1} + g_{1}^\circ<\ell_0+g_0^\circ$.
    \item If $\ell +g^\circ < \frac{q-1}{d(d+1)}$, then the sequence $\{\ell_{2i} + g_{2i}^\circ\}$ is strictly decreasing, the sequence $\{\ell_{2i+1} +g_{2i+1}^\circ\}$ is strictly increasing, and $\ell_{1} + g_{1}^\circ>\ell_0+g_0^\circ$.
    \item If $\ell \leq  \frac{q-1}{d(d+1)}$, and $\ell +g^\circ \geq \frac{q-1}{d(d+1)}$, then both sequences $\{\ell_{2i} + g_{2i}^\circ\}$ and $\{\ell_{2i+1} +g_{2i+1}^\circ\}$ are increasing.
\end{enumerate}
\end{proof}

It remains to understand the inequalities $d(\ell_i + g_i^\circ) \leq \frac{q-1}{d}$. Using the formulae in Lemma \ref{formulae}, we see that for $i \geq 0$, $d(\ell_{2i} + g_{2i}^\circ) \leq \frac{q-1}{d}$ is equivalent to 
\begin{equation*}\label{evenc} \ell + g^\circ \leq (q-1) \left( \frac{1+ d^{-2i-1}}{d(d+1)}\right) .\end{equation*}
And for $i \geq 0$, $d(\ell_{2i+1} + g_{2i+1}^\circ) \leq \frac{q-1}{d}$ is equivalent to 
\begin{equation*}\label{oddc} \ell \geq (q-1) \left( \frac{1- d^{-2i-2}}{d(d+1)}\right) .\end{equation*}

The following theorem puts the above discussion together. We use Lemma~\ref{compare} to determine what bound from Lemma~\ref{manybounds} is best. Now we are ready to prove our main result, Theorem \ref{bestboundthm}. We restate the theorem for convenience. 

\vspace{0.2cm}

\noindent\textbf{Theorem~\ref{bestboundthm}.}\begin{em} Let $f(x) \in \mathbb{F}_q[x]$ be as in~\eqref{fform} and assume that the constant term of $f$ is nonzero. Then exactly one of the following holds.  
\begin{enumerate}[label=\textbf{(\arabic*)}]
\item If $\ell > \frac{q-1}{d(d+1)}$ and $i \geq -1$ is the largest integer such that 
\begin{equation}\label{1eq} \ell + g^\circ < (q-1) \left( \frac{1+ d^{-2i-1}}{d(d+1)}\right),\end{equation}
then 

\[ |Z(f)| \leq \frac{q-1}{d+1} - d^{2i+2}\left(\ell - \frac{q-1}{d(d+1)}\right).\]

\item If $\ell + g^\circ  < \frac{q-1}{d(d+1)}$ and $i \geq -1$ is the largest integer such that 
\begin{equation}\label{2eq} \ell > (q-1) \left( \frac{1- d^{-2i-2}}{d(d+1)}\right),\end{equation}
then 

\[ |Z(f)| \leq \frac{q-1}{d+1} - d^{2i+3} \left( \frac{q-1}{d(d+1)} - (\ell + g^\circ) \right).\]

\item  If $\ell \leq \frac{q-1}{d(d+1)}$, $ \ell + g^\circ \geq \frac{q-1}{d(d+1)}$, and $d(d+1)\ell + d^2 g^\circ < q-1$, then 
\[ |Z(f)| \leq d(\ell+g^\circ) .\]

\item If $\ell \leq \frac{q-1}{d(d+1)}$, $ \ell + g^\circ \geq \frac{q-1}{d(d+1)}$, and $d(d+1)\ell + d^2 g^\circ \geq q-1$, then 
\[ |Z|(f)| \leq f^\circ = \frac{q-1}{d} - \ell .\]

\end{enumerate}\end{em}

\begin{proof}
\vspace{0.2cm}

\textit{\textbf{(1)}} The condition $\ell > \frac{q-1}{d(d+1)}$ gives that $d(\ell_j+g_j^\circ) < \frac{q-1}{d}$ for all odd $j \geq 1$. Equation~\eqref{1eq} implies that $d(\ell_j+g_j^\circ) < \frac{q-1}{d}$ for all even $0 \leq j \leq  2i$. By Lemma~\ref{manybounds}, $|Z(f)| \leq d(\ell_j + g^\circ_j)$ for $0 \leq j \leq 2i+1$. By Lemma~\ref{compare}, the lowest upper bound of this set is 
\[ d(\ell_{2i+1} + g_{2i+1}^\circ) = \frac{q-1}{d+1}\left(d^{2i+1}+1 \right) - d^{2i+2}\ell.\]

\textit{\textbf{(2)}} The condition $\ell + g^\circ < \frac{q-1}{d(d+1)}$ gives that $d(\ell_j+g_j^\circ) < \frac{q-1}{d}$ for all even $j \geq 0$. Equation~\eqref{2eq} implies that $d(\ell_j+g_j^\circ) < \frac{q-1}{d}$ for all odd $1 \leq j \leq  2i+1$. By Lemma~\ref{manybounds}, $|Z(f)| \leq d(\ell_j + g^\circ_j)$ for $0 \leq j \leq 2i+2$. By Lemma~\ref{compare}, the lowest upper bound of this set is 
\[ d(\ell_{2i+2} + g_{2i+2}^\circ) = d^{2i+3}(\ell + g^\circ) - \frac{q-1}{d+1} \left( d^{2i+2}-1 \right).\]


\textit{\textbf{(3\&4)}} If $\ell \leq \frac{q-1}{d(d+1)}$ and $ \ell + g^\circ \geq \frac{q-1}{d(d+1)}$, then by Lemma~\ref{compare}, $\{d(\ell_{2i} + g^\circ_{2i})\}_{i \geq 0}$ is increasing. Similarly, $\{d(\ell_{2i+1} + g^\circ_{2i+1})\}_{i \geq 0}$ is increasing and $f^\circ < d(\ell_1 + g^\circ_1)$. Therefore either $f^\circ$ or $d(\ell + g^\circ)$ is the lowest upper bound on $|Z(f)|$, and the remaining two cases follow immediately.

\end{proof}

To illustrate how the bound improves over the iteration employed in Theorem~\ref{bestboundthm}, consider the difference between the bound on $|Z(f)|$ given in part (2) of Theorem~\ref{bestboundthm} and the degree bound. 
\[ \frac{q-1}{d}-\ell - \bigg( \frac{q-1}{d+1} - d^{2i+3} \left( \frac{q-1}{d(d+1)} - (\ell + g^\circ) \right) \bigg)\] 
\[ =\left( 1 + d^{2i+3} \right) \left( \frac{q-1}{d(d+1)} - (\ell + g^\circ) \right) + g^\circ. \]
In other words, the difference in the degree bound and the iterative bound grows exponentially in the number of iterations. 

In Example \ref{ex} and in Example \ref{ex2}, we saw that the bound in cases (3) and (4) of Theorem~\ref{bestboundthm} can be tight. The following is an example where the bound in case (1) of Theorem~\ref{bestboundthm} is tight. 

\begin{ex} \rm
Let $p=379$, $d=2$, $\ell=\frac{p-7}{4}=93,g^\circ=1$, and $f(x)=x^{96}+x+317 \in \mathbb{F}_p[x]$. Below we give $f_i(x) = x^{\frac{q-1}{d}-\ell_i} + g_i(x)$ for $i=1,2$. These are the polynomials formed in the iteration technique and have the property $|Z(f)| \leq |Z(f_i)|$. 
\[f_1(x)=x^{188}-54x^2-255x-1, \quad f_2(x) =x^6+378x^4+248x^3+55x^2+127x+116.\]

Therefore $|Z(f)| \leq |Z(f_1)| \leq |Z(f_2)| \leq 6$. Note that we can also bound $|Z(f)|$ by applying Theorem~\ref{bestboundthm} which gives $|Z(f)| \leq p-1-d^2 \ell=6$. Indeed, we can verify that $Z(f)=\{21,37,89,303,322,365\}$, so the iterative technique gives a tight bound in this case. 
\end{ex}

Below is an example where the bound in case (2) of Theorem~\ref{bestboundthm} is close to tight.

\begin{ex}\rm
Let $p=367$, $d=2$, $\ell=\frac{p+1}{8}=46,g^\circ=1$, and $f(x)=x^{137}+x+111 \in \mathbb{F}_p[x]$. By Theorem~\ref{bestboundthm}, we can take $i=1$ and get 
$|Z(f)| \leq d^3(\ell + g^\circ) - (p-1)(d-1)=8(\ell+g^\circ)-(p-1)=10.$
We can verify that $Z(f)=\{82,105,109,195,216,246,333\}$.
\end{ex}

Note that in the two above examples, the degree bound on $|Z(f)|$ and the bound obtained by earlier stages of the iteration are very far from the true size of $|Z(f)|$. This demonstrates the effectiveness of the iterative technique.


\section{Concluding remarks}\label{seccon}

The main theorem of Section 4, Theorem~\ref{bestboundthm} was proved by iterating Theorem~\ref{t1} as many times as possible. A more complicated procedure involving both Theorem~\ref{t1} and Theorem~\ref{t2} can be used in some cases to obtain bounds on $|Z(f)|$ for more pairs of $g^\circ,\ell$. Indeed, this idea was partly used in the proof of Theorem~\ref{t3}. 

We expect there are ways to extend some of our results. We propose the following problem, which would extend Theorem~\ref{t3}. 

\begin{prob} For what pairs of $g^\circ,\ell$ can the degree bound on $|Z(f)|$ in~\eqref{fform} be improved? 
\end{prob}


\section{Acknowledgements} The authors would like to thank Greg Martin for helpful discussions. The authors are also indebted to Igor Shparlinski for suggesting Corollary~\ref{sqrtcor} and Theorem~\ref{ratthm} in addition to other insightful comments. The research of the first author was supported in part by NSERC Discovery, OTKA K 119528 and NKFI KKP 133819 grants. The research of the second author was supported in part by Killam and NSERC doctoral scholarships.


\end{document}